\title{Zeta functions of orders on surfaces}
\author{Daniel Chan}
\thanks{The authors were supported by the Australian Research Council Discovery Project grant  DP220102861}
\address{School of Mathematics and Statistics, 
UNSW Sydney, 
NSW 2052,
Australia
}
\email{danielc@unsw.edu.au}
\author{Sean Lynch}
\address{Institute for Theoretical Sciences, Westlake University, Hangzhou, Zhejiang, China}
\email{sblynch@westlake.edu.cn}
\thanks{}
\newcommand{\rad}{\operatorname{rad}}
\newcommand{\Hilb}{\operatorname{Hilb}}
\newcommand{\Serre}{\operatorname{Serre}}
\newtheorem{addendum}[theorem]{Addendum}
\newcommand{\Var}{\operatorname{Var}}
\newcommand{\mot}{\operatorname{mot}}
\begin{document}

\begin{abstract}
In this manuscript, we give effective methods for computing the zeta function of maximal orders on surfaces.  
\end{abstract}

\maketitle

\section{Introduction}

Zeta functions were classically studied for rings of integers and curves over finite fields. There are a number of ways to extend the definition to (sheaves of) noncommutative algebras $\cA$, the most interesting perhaps, and the one we shall focus on is 
\begin{equation}  \label{eq:introZeta}
    \zeta_{\cA}(s) : = \sum_{n \in \bN} a_n n^{-s}
\end{equation}
where $a_n$ is the number of left ideals $\cI$ with $|\cA/ \cI| = n$.

In this direction, Hey already studied zeta functions for maximal orders over rings of integers in her PhD thesis \cite{Hey}. Interestingly, the zeta function encodes information about the ramification data of the order which measures how far the order is from being Azumaya. It thus became an important tool in Weil's classic treatment of class field theory \cite{Weil}. Since then, there has been a lot of work on zeta functions of orders in the one-dimensional case, especially by Bushnell-Reiner \cite{BR80,BR85} in their work on Solomon's conjectures \cite{Solomon2}. Though the pair quickly solved the first conjecture, the second conjecture was finally fully resolved by Iyama \cite{Iyama03}.

For two-dimensional orders, much less work has been done. Berndt's survey \cite{Ber03} on K\"ahler's calcolo zeta program sums things up nicely. Segal \cite{Segal2,Segal} rediscovered much of this program; Fukshansky, Kühnlein and Schwerdt \cite{FKS17} rediscovered the same formula as Segal \cite{Segal}. Perhaps the most interesting developments in this case are for (essentially) commutative smooth projective surfaces $X$ over $\bF_q$ in G\"ottsche's study \cite{Gottsche} of Hilbert schemes of points $\Hilb^n(X)$ on surfaces. Since the $a_n$ can be computed essentially by counting $\bF_q$-rational points on $\Hilb^n(X)$, the Weil conjectures can be used to determine the Poincar\'e  polynomials for the $\Hilb^n(X)$ and his work gives beautiful formulae for ``Poincar\'e polynomial'' versions of the zeta function. These can be thought of as given by (\ref{eq:introZeta}) where the $a_n$ are now Poincar\'e polynomials of $\Hilb^n(X)$ (see Section~\ref{sec:global} for precise definitions). 

In this paper, we wish to study zeta functions of maximal orders $\cA$ on arithmetic surfaces $X$, i.e. surfaces over $\mathbb{F}_q$ or curves over $\mathbb{Z}$. Our study is guided by two key questions. Firstly, how is the ramification data of the order encoded in the zeta function if at all? Secondly, as a test case, can one develop effective computational methods, sufficient to calculate the zeta functions of noncommutative projective planes corresponding to a maximal order on $\bP^2_{\bF_q}$ ramified on a cubic curve? 

To this end, it is natural that one ought to impose some ``smoothness'' assumptions. Indeed, even for commutative curves, the theory of zeta functions is much more complicated, and for surfaces, most work has centred on Kleinian singularities and their stacky resolutions \cite{Naka}, \cite{GNS18}, where the formulas are quite complicated and involve substantial representation theory. In particular, we impose a stronger condition than finite global dimension. Perhaps the most natural notion of ``smoothness'' is that of terminal as introduced in the minimal model program for orders on surfaces in \cite{CI05}. Unfortunately, terminal orders in the arithmetic setting have only been classified in restricted settings \cite{CI24}.

Our starting point, will be a local formula for zeta functions established in the second author's PhD thesis \cite{Lyn}. In general, it seems that the formula is not appropriate for piecing together to obtain global zeta functions. However, the following special case seems to work surprisingly well.
\begin{theorem}  \label{thm:introSean}
Let $R$ be a commutative local noetherian normal domain of dimension two and $A$ an $R$-order. Suppose there exists a regular normal element $z \in \rad A$ such that $\bar{A} := A/(z)$ is isomorphic to a product of maximal orders over discrete valuation rings and that $A/\rad A \simeq \prod^m M_r(\bF_q)$. Then 
$$\zeta_A(s) = \prod_{n=0}^{\infty} \zeta_{\bar{A}}((n+1)s -n)
$$
where Hey's formula for zeta functions gives $\zeta_{\bar{A}}(s) = \prod_{j=0}^{r-1}(1-q^{j-rs})^{-m}$. 
\end{theorem}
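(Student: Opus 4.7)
The plan is to stratify finite-colength left ideals $I \subset A$ by their $z$-adic associated graded data in $\bar{A}$, count the fibres of this stratification, and then recognise the resulting generating-function sum as the claimed product.

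Since $z$ is a regular normal element of $\rad A$, for any left ideal $I$ of finite colength the $z$-adic filtration $F^n := (z^n A + I)/I$ on $A/I$ is exhaustive (because $z$ acts nilpotently on any finite-length $A$-module). Each graded piece $F^n/F^{n+1}$ is a cyclic left $\bar{A}$-module generated by the image of $z^n$, and so is of the form $\bar{A}/\bar{I}_n$ for a uniquely determined left ideal $\bar{I}_n \subseteq \bar{A}$. The sequence $(\bar{I}_n)_{n \geq 0}$ is ascending and stabilises at $\bar{A}$, and yields
\[ |A/I| = \prod_n |\bar{A}/\bar{I}_n|. \]

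Next, for a fixed chain $(\bar{I}_n)$, I would count the left ideals $I$ producing this data. In the commutative baseline $A = \bF_q[[x,y]]$ with $z = x$, an induction on the truncation length reproduces the Ellingsrud--Str\o mme cell count, giving a fibre size $\prod_{n \geq 1} |\bar{A}/\bar{I}_n|$. To extend this to the hypothesised setting, one exploits that $\bar{A}$ is a product of maximal orders over DVRs: after decomposing $\bar{A}$ into its simple factors each $\bar{I}_n$ becomes an $r$-tuple of principal invariant factors, and lifting a generator at level $n$ modulo $z^{n+1}A$ picks up $|\bar{A}/\bar{I}_n|$ independent choices once the lower-level data has been fixed.

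Finally, assembling the sum
\[ \zeta_A(s) = \sum_{(\bar{I}_n)} \prod_{n \geq 1} |\bar{A}/\bar{I}_n| \cdot \prod_{n \geq 0} |\bar{A}/\bar{I}_n|^{-s} \]
(with $(\bar{I}_n)$ running over ascending chains stabilising at $\bar{A}$), I would invoke Hey's formula for $\zeta_{\bar{A}}$ together with a matrix analogue of the partition-conjugation identity --- which in the commutative case is exactly what produces G\"ottsche's formula from the $z$-adic stratification --- to re-organise it as $\prod_{n \geq 0} \zeta_{\bar{A}}((n+1)s - n)$. The main obstacle I anticipate is the fibre count: the twist of the $\bar{A}$-module structure by the normality automorphism of $z$ complicates the bookkeeping of lifts across levels, and one has to verify that successive choices remain genuinely independent. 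I expect the local zeta function formula from \cite{Lyn} to supply the cleanest route through this bookkeeping.
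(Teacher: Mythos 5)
Your stratify-by-associated-graded approach is essentially the route taken in the second author's thesis \cite{Lyn}, which the paper cites as the source of Theorem~\ref{thm:introSean}; the paper's own Section~\ref{sec:local} deliberately reproves this from a different angle via what it calls $z$-adic Hecke operators. Both methods count the same objects and rest on the same underlying combinatorial fact --- that the number of finite-colength $I \leq A$ with a prescribed $z$-adic associated graded chain $(\bar{I}_n)$ is $\prod_{n\geq 1}|\bar{A}/\bar{I}_n|$, which the paper establishes as Lemma~\ref{lem:countingLifts} --- so your proposal is correct in outline. What the Hecke-operator packaging buys is a structural explanation of the product form: the operator $T$ performs one lifting step, its degree map factors as $\deg T = Z_{\bar{M}}(\vec{t})\,\Xi$ (``count the new top level, then shift''), and because $\Xi$ is a continuous ring homomorphism the $n$-fold iterate telescopes into a product of substitutions into Hey's series (Theorem~\ref{thm:Sean}). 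Your version achieves the same telescoping by the change of variables $d_n := c_n - c_{n+1}$ on the exponents $c_n = \log_q|\bar{A}/\bar{I}_n|$ --- the ``partition-conjugation'' identity you allude to --- which is precisely $\Xi$-iteration in different notation.

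Two of the details you flag do need care, and it is worth knowing exactly where the hypotheses enter. First, the chain $(\bar{I}_n)$ is not literally ascending but only $\sigma$-twisted ascending, $\sigma(\bar{I}_n) \subseteq \bar{I}_{n+1}$, where $\sigma$ is conjugation by $z$; this is harmless for the zeta function, which only remembers classes in the Grothendieck group after the untwisting map $\rho$, but it affects how you index your chains. Second, the independence of the successive lift counts is not automatic but is exactly what the hypothesis on $\bar{A}$ supplies: via Proposition~\ref{prop:cofiniteLengthIdealCond}, the assumption that $\bar{A}$ is a product of maximal orders over DVRs is equivalent to every finite-colength left ideal $\bar{I}_n$ being free of rank~$1$, hence projective and isomorphic to $\bar{A}$, which is what makes the extension $0 \to (X \cap zA)/zX \to X/zX \to \bar{A}/\bar{I}_0 \to 0$ split at each level and gives the fibre at each step as $|\Hom_{\bar{A}}(\bar{A}, -)| = |\cdot|$. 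Without that hypothesis the fibre count $\prod_{n\geq 1}|\bar{A}/\bar{I}_n|$ fails, so this is the crux rather than mere bookkeeping.
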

This follows from Proposition~\ref{prop:cofiniteLengthIdealCond} and Corollary~\ref{cor:SeansLocalThm}. The important point here is that the formula is expressed as a relatively straightforward product and our first goal is to introduce a new type of Hecke operator which allows us to reprove this formula in a way which illuminates why we obtain such a product formula. This is accomplished in Section~\ref{sec:local}. 

Section~\ref{sec:global} looks at global zeta functions. Theorem~\ref{thm:introSean} applies quite readily to Azumaya algebras of $\cA$ of degree $d$ on a two-dimensional regular integral scheme $X$ of finite type over $\bZ$ to give in this case (see Proposition \ref{prop:zetaAzumaya})
$$\zeta_{\cA}(s)=\prod_{n=1}^\infty\prod_{j=1}^d \zeta^{\Serre}_X(nd(s-1)+j)$$
where $\zeta^{\Serre}_X$ is Serre's zeta function counting 0-cycles \cite{Ser65} (whose definition is recounted in Equation~(\ref{eq:defSerreZeta})). 
The rest of the section shows how the global zeta function encodes ramification, at least when the ramification is sufficiently nice that we know we can apply the local formula above. In particular we obtain
\begin{theorem}
Let $\cA$ be a maximal order of degree $d$ on a smooth surface over $\bF_q$, where $\bF_q$ contains a primitive $d$-th root of unity. Suppose the ramification consists of a single smooth ramification curve $Y$ and that the ramification is given by the degree $e$ cover $\tilde{Y} \to Y$. Then 
$$  \frac{\zeta_{\cA}(s)}{\zeta_{M_d(\cO_X)}(s)} = 
\frac{\prod_{n=1}^{\infty}\prod_{j=1}^{d/e} \zeta^{\Serre}_{\tilde{Y}}(\tfrac{d}{e} n(s-1) + j)}{\prod_{n=1}^{\infty}\prod_{j=1}^{d} \zeta^{\Serre}_{Y}(dn(s-1) + j)}.$$
\end{theorem}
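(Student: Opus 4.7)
The plan is to express both zeta functions as Euler products over closed points of $X$ and then invoke Theorem~\ref{thm:introSean} locally at each point of $Y$. Since any finite-colength left ideal decomposes as a direct sum indexed by its support, one has $\zeta_{\cA}(s) = \prod_{x \in X_0} \zeta_{\hat{\cA}_x}(s)$ and the analogous factorization for $M_d(\cO_X)$. At each $x \notin Y$, $\cA$ is unramified maximal, hence Azumaya, and an Azumaya algebra over a complete local ring with finite residue field must be split, so $\hat{\cA}_x \cong M_d(\hat{\cO}_{X,x})$; these factors cancel in the ratio, which therefore reduces to a product over $x \in Y_0$.

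For $x \in Y_0$, let $f_x \in \hat{\cO}_{X,x}$ be a local equation for $Y$. With $z = f_x$, Theorem~\ref{thm:introSean} applies trivially to $M_d(\cO_X)$ since $M_d(\hat{\cO}_{X,x})/(f_x) = M_d(\hat{\cO}_{Y,x})$. For $\cA$, the local structure theory of maximal orders at smooth ramification points gives $\hat{\cA}_x/(f_x) \cong \prod_{\tilde{x}\mapsto x} M_{d/e}(\hat{\cO}_{\tilde{Y},\tilde{x}})$, the product indexed by the preimages of $x$ in $\tilde{Y}$, with $\hat{\cA}_x/\rad\hat{\cA}_x$ correspondingly a product of $M_{d/e}$'s over their residue fields. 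After the mild generalization of Theorem~\ref{thm:introSean} allowing the simple components to have different finite residue fields, we obtain $\zeta_{\hat{\cA}_x}(s) = \prod_{n=0}^{\infty} \zeta_{\hat{\cA}_x/(f_x)}((n+1)s - n)$ and the analogous formula for $M_d(\hat{\cO}_{X,x})$.

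Assembling the products over $Y_0$ and exchanging with the product over $n$, the bijection $\{(x,\tilde{x}) : \tilde{x}\mapsto x\} \leftrightarrow \tilde{Y}_0$ together with Hey's formula yields
$$\prod_{x \in Y_0} \zeta_{\hat{\cA}_x/(f_x)}(u) = \prod_{j=0}^{d/e - 1} \zeta^{\Serre}_{\tilde{Y}}((d/e)u - j), \qquad \prod_{x \in Y_0} \zeta_{M_d(\hat{\cO}_{Y,x})}(u) = \prod_{j=0}^{d-1} \zeta^{\Serre}_{Y}(du - j).$$
Substituting $u = (n+1)s - n$, using $(d/e)((n+1)s - n) - j = (d/e)(n+1)(s-1) + (d/e - j)$, and reindexing via $m = n+1$ together with $j \mapsto d/e - j$ (respectively $d - j$) converts the double product into the form displayed in the theorem.

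The principal obstacle is the identification $\hat{\cA}_x/(f_x) \cong \prod_{\tilde{x}\mapsto x} M_{d/e}(\hat{\cO}_{\tilde{Y},\tilde{x}})$: this is the precise bridge between the abstract ramification datum $\tilde{Y}\to Y$ and the algebra structure, and requires the classification of completions of maximal orders on smooth surfaces along a smooth ramification curve, concretely by exhibiting $\hat{\cA}_x$ as a cyclic algebra whose splitting cover recovers $\tilde{Y}\to Y$ near $x$. The auxiliary generalization of Theorem~\ref{thm:introSean} to varying residue fields should follow directly from its proof, which plainly localizes on each simple component of the residue ring.
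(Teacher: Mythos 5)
Your overall architecture matches the paper's: express both zeta functions as Euler products over closed points, cancel the split Azumaya factors away from $Y$ (using that Azumaya algebras over a complete local ring with finite residue field are split), and compute the local factors along $Y$ via the homogeneous-slice formula together with the structure of the ramification cover. The paper packages this as Propositions~\ref{prop:zetaAzumaya} and~\ref{prop:zetaSmoothRamGlobal} applied to the strata $X - Y$ and $Y$, but the mechanism is the same.

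There is, however, a genuine error in the choice of slicing element for $\cA$. You take $z=f_x$ for both $M_d(\cO_X)$ and $\hat\cA_x$. For $M_d(\hat\cO_{X,x})$ this is correct, but for $\hat\cA_x$ at a point $x$ of the ramification curve $Y$ the quotient $\hat\cA_x/(f_x)$ always has nonzero nilpotents once the ramification index $e$ exceeds $1$: in the local presentation $\hat\cA_x\cong M_{r}\bigl(\Delta_l(x)\bigr)$ (with $r=d/e$) furnished by Proposition~\ref{prop:symbolsHomogSlice}, the normal element $\tilde x$ satisfies $\tilde x^{\,e}\in f_x\cdot \hat\cA_x^\times$, so $\tilde x$ is nilpotent modulo $(f_x)$. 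Hence $\hat\cA_x/(f_x)$ is \emph{not} a product of maximal orders over discrete valuation rings, and Theorem~\ref{thm:introSean} cannot be applied with this $z$. A rank count also shows the claimed isomorphism $\hat\cA_x/(f_x)\cong\prod_{\tilde x\mapsto x} M_{d/e}(\hat\cO_{\tilde Y,\tilde x})$ is impossible: the left side has rank $d^2$ over $\hat\cO_{Y,x}$, while the right side has rank $e\cdot(d/e)^2 = d^2/e$.

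The repair is to take $z$ to be the normal generator of the radical whose $e$-th power is a unit multiple of $f_x$ (the element $\tilde x$ above). With this slice, $\hat\cA_x/(z)$ is a product of maximal orders over the local rings of $\tilde Y$ at the preimages of $x$, and $\hat\cA_x/\rad\hat\cA_x\simeq\prod_{\tilde x\mapsto x} M_{d/e}(\kappa(\tilde x))$; this is precisely what Proposition~\ref{prop:symbolsHomogSlice} and Corollary~\ref{cor:smoothRamLocal} supply, and it is only with this $z$ that $\tilde Y$ genuinely enters the local formula (the slice $f_x$ would only see $Y$). With that substitution, your assembly over $y\in Y$, the bijection with $\tilde Y$ as index set, and the reindexing all go through and yield the stated formula. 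A minor point: since $\tilde Y\to Y$ is cyclic and \'etale here, the preimages of any point of $Y$ share a common residue field, so Corollary~\ref{cor:SeansLocalThm} applies as stated and the ``mild generalization'' to varying residue fields is not actually needed.
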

This follows from Propositions~\ref{prop:zetaAzumaya} and \ref{prop:zetaSmoothRamGlobal}. 
Formulae for Poincar\'e zeta functions are also given. Section~\ref{sec:eg} looks at noncommutative analogues of the projective plane. Subject to some mild assumptions, the zeta functions have been computed. 

\begin{theorem}  \label{thm:introSklyanin}
Let $\cA$ be a maximal order of degree $d$ on $X=\bP^2_{\bF_q}$ that is ramified on a cubic curve $Y$ with ramification index $e$. Suppose that $q$ is large enough that it contains a primitive $d$-th root of unity, $Y$ has an $\bF_q$-rational point and that the singularities of $Y$ are $\bF_q$-rational. Suppose also that the ramification covers are totally ramified at the singularities of $Y$. Then 
$$  \frac{\zeta_{\cA}(s)}{\zeta_{M_d(\cO_X)}(s)} = 
\frac{\prod_{n=1}^{\infty}\prod_{j=1}^{d/e} \zeta^{\Serre}_{Y}(\tfrac{d}{e} n(s-1) + j)}{\prod_{n=1}^{\infty}\prod_{j=1}^{d} \zeta^{\Serre}_{Y}(dn(s-1) + j)}.$$
\end{theorem}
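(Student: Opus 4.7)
My plan is to reduce the computation to local factors via the Euler product $\zeta_{\cA}(s) = \prod_{x \in X_0} \zeta_{\widehat{\cA}_x}(s)$, together with the analogous decomposition for $M_d(\cO_X)$, where the products are over closed points of $X = \bP^2_{\bF_q}$. At every $x \notin Y$, $\widehat{\cA}_x$ is Azumaya of degree $d$ and the local factor agrees with that of $M_d(\cO_X)$, so the ratio $\zeta_{\cA}/\zeta_{M_d(\cO_X)}$ is a product over closed points $x \in Y$.

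At each smooth point $x \in Y^{sm}$, Proposition~\ref{prop:zetaSmoothRamGlobal} applies locally---an instance of Theorem~\ref{thm:introSean} with $z$ a local equation of $Y$ and $\bar{A} = \widehat{\cA}_x/(z)$ the induced maximal order over $\cO_{Y,x}$---and produces the local factor expected from the numerator $\prod_{n,j} \zeta^{\Serre}_{\tilde{Y}}$. At a singular point $x \in Y^{sing}$, the hypotheses ($x$ is $\bF_q$-rational, the ramification is totally ramified on every branch of $Y$ at $x$, and $\mu_d \subset \bF_q$) force $\bar{A} = \widehat{\cA}_x/(z)$ to split as a product of maximal orders over the DVRs associated to the analytic branches of $Y$ at $x$, and they force $A/\rad A$ to be a product of matrix algebras over $\bF_q$. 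Theorem~\ref{thm:introSean} therefore applies and produces an explicit local zeta factor at $x$.

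The main geometric input is the identification $\zeta^{\Serre}_{\tilde{Y}} = \zeta^{\Serre}_Y$. On the smooth locus, a degree $d/e$ \'etale cover $\tilde{Y}^{sm} \to Y^{sm}$ compatible with total ramification at the singularities is, using $\mu_d \subset \bF_q$, a Kummer-type self-cover of $Y^{sm}$ (for instance $\mathbb{G}_m \xrightarrow{x \mapsto x^{d/e}} \mathbb{G}_m$ in the nodal cubic case), hence abstractly isomorphic to $Y^{sm}$ as an $\bF_q$-scheme. The total-ramification condition forces the compactification to re-glue into a singular curve of the same combinatorial type as $Y$, so $\tilde{Y} \cong Y$ as $\bF_q$-schemes. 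Substituting $\zeta^{\Serre}_{\tilde{Y}} = \zeta^{\Serre}_Y$ into the smooth-ramification formula (extended by the local singular contributions of Theorem~\ref{thm:introSean}) and using Proposition~\ref{prop:zetaAzumaya} for the denominator yields the claimed formula.

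The hard part will be the last paragraph: verifying that the local factor from Theorem~\ref{thm:introSean} at $x \in Y^{sing}$ matches the corresponding local contribution of $\zeta^{\Serre}_Y$ (which is nontrivial because $\cO_{Y,x}$ is non-regular and its local Serre zeta is more intricate than $(1-q_x^{-s})^{-1}$), and that the totally ramified Kummer covers indeed reconstruct $Y$ up to isomorphism of $\bF_q$-schemes. The $\bF_q$-rational point hypothesis on $Y$ enters here to ensure the global cover exists in the required form, for example as translation by a torsion point in the relevant generalized Jacobian.
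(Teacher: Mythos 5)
Your high-level plan (Euler product $\zeta_{\cA}=\prod_x\zeta_{\hat\cA_x}$, splitting off the Azumaya locus, Corollary~\ref{cor:SeansLocalThm} at each ramified point, then substituting $\zeta^{\Serre}_{\tilde Y}=\zeta^{\Serre}_Y$) is the paper's approach, but there are two substantive problems with how you close the argument, plus a notational slip.

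First, the identification $\zeta^{\Serre}_{\tilde Y}=\zeta^{\Serre}_Y$ via an abstract isomorphism $\tilde Y\cong Y$ only works when $Y$ is \emph{singular}. The theorem also covers a smooth cubic $Y$ (an elliptic curve), where the ramification cover $\tilde Y\to Y$ is an \'etale isogeny of degree $e$; then $\tilde Y$ is typically \emph{not} isomorphic to $Y$, only isogenous. Your ``Kummer self-cover'' picture breaks down, and the ``translation by a torsion point in the generalized Jacobian'' remark does not rescue it. What the paper does in the Sklyanin-order example is invoke Weil's $L$-function theorem, $Z^{\Serre}_{\tilde Y}(t)=Z^{\Serre}_Y(t)\,L(t)$ for a polynomial $L$, together with Hurwitz (an \'etale degree-$e$ cover of a genus-1 curve has genus 1) to force $\deg L=0$, hence $L=1$. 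That gives $Z^{\Serre}_{\tilde Y}=Z^{\Serre}_Y$ without any isomorphism. For the singular cubics the paper does not build a global $\tilde Y$ either; it stratifies $Y$ into smooth locally closed pieces $Y_i$ whose ramification covers $\tilde Y_i$ are non-canonically isomorphic to $Y_i$ (a $\bP^1$, an $\bA^1$, a $\bG_m$), and uses multiplicativity of $Z^{\Serre}$ on disjoint unions.

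Second, the ``hard part'' you flag is a phantom. Serre's zeta function $\zeta^{\Serre}_Y(s)=\prod_{y\in Y}(1-|\kappa(y)|^{-s})^{-1}$ counts $0$-cycles, so its local factor at \emph{any} closed point $y$, singular or not, is $(1-|\kappa(y)|^{-s})^{-1}$; there is no ``more intricate'' contribution from a non-regular $\cO_{Y,y}$. The matching at a singular (normal crossing) point is already automatic: under the total-ramification hypothesis, Corollary~\ref{cor:smoothRamLocal}(3) gives $\hat\cA_y/\rad\hat\cA_y\cong M_{d/e}(\kappa(y))$, and Corollary~\ref{cor:SeansLocalThm} then produces exactly the factor $\prod_{n,j}(1-|\kappa(y)|^{-n(d/e)(s-1)-j})^{-1}$ that the stratum formula of Proposition~\ref{prop:zetaSmoothRamGlobal} demands for the unique point of $\tilde Y_i$ above $y$. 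Finally, a small slip: the ramification cover $\tilde Y\to Y$ has degree $e$ (the ramification index), not $d/e$; it is $r=d/e$ that shows up in the exponents and as the degree of the local simple modules.
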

In the case that $Y$ is singular, $\zeta^{\Serre}_Y(s) = \frac{1}{(1-q^{1-s})^h}$ where $h=3$ when $Y$ is 3 lines, $h=2$ when $Y$ is a conic and a line and $h=1$ when $Y$ is a nodal cubic. A curious point here is that the zeta function does not depend on the particular ramification cover, just its degree. 

\section{Background on Zeta functions} \label{sec:backgroundZeta}

There are various types of zeta functions floating around, and in this section, we fix notation for the ones of primary interest in this paper. 

Let $\cA$ be an order on a noetherian normal integral scheme $X$. For us, this means that $\cA$ is a coherent sheaf of algebras on $X$ which is torsion-free and $\cA \otimes_{\cO_X} K(X)$ is a simple artinian algebra over the function field $K(X)$ of $X$. Its {\em degree} is defined to be $\deg A = \deg \cA \otimes_{\cO_X} K(X)$

We wish to investigate zeta functions  for such orders. The versions we use are the following. 
\begin{equation} \label{eq:defZetaFn}
\zeta_{\cA}(s) = \sum_{\text{cofinite}\ \cI \leq \cA} |\cA/\cI|^{-s}
\end{equation}
where the sum is over all left ideals $\cI$ such that $\cA/\cI$ is a coherent $\mathcal{O}_X$-module supported at a finite number of closed points with finite residue field so the cardinality $|\cA/\cI|$ is naturally defined. In this paper, we will view the sum in (\ref{eq:defZetaFn}) as a formal Dirichlet series, defined if for any $n \in \bN$, the number of ideals $\cI$ with $|\cA/\cI| = n$ is finite. Convergence questions will not be considered here.

 We will use $x \in X$ to mean that $x$ is a closed point of $X$ and let $\kappa(x)$ denote the residue field. Local zeta functions arise when we consider complete localisations $\hat{\cA}_x:= \cA \otimes_{\cO_{X}} \hat{\cO}_{X,x}$. The Chinese remainder theorem gives the standard product expansion
 \begin{equation}  \label{eq:localGlobalProductExpansion}
     \zeta_{\cA}(s) = \prod_{x \in X} \zeta_{\hat{\cA}_x}(s).
 \end{equation}

Often, $X$ will be defined over a finite field, say $\bF_q$ in which case we will set $t = q^{-s}$ and use
\begin{equation}  \label{eq:defZFn}
Z_{\cA}(t) = \sum_{\text{cofinite}\ \cI \leq \cA} t^{\dim_{\bF_q} \cA/\cI}
\end{equation}
so $\zeta_{\cA}(s) = Z_{\cA}(q^{-s})$.

If $\cA = \cO_X$, the zeta functions here will in general differ from Serre's zeta functions which can be defined as follows.
\begin{equation}  \label{eq:defSerreZeta}
\zeta_X^{\Serre}(s) = \prod_{x \in X} (1 - |\kappa(x)|^{-s})^{-1}, \quad
Z_X^{\Serre}(t)=\prod_{x\in X} (1-t^{\deg x})^{-1}
\end{equation}
where in the formula for $Z_X$ we are assuming that $X$ is defined over $\bF_q$ and $\deg x = [\kappa(x) : \bF_q]$. The difference between the two definitions is that the zeta functions we study count 0-dimensional subschemes (or their noncommutative analogues), whereas Serre's zeta function counts 0-cycles. Serre's zeta functions have been intensely studied, in particular, via the Weil conjectures. We will thus view them as ``known'' quantities and one of our key aims is to relate the zeta functions $\zeta_\cA$ to Serre's zeta functions of varieties related to $\cA$.

\section{Local zeta functions via Hecke operators}  \label{sec:local}

Given the local-global formula~(\ref{eq:localGlobalProductExpansion}), it is natural to study zeta functions for orders over complete local rings first. We thus revisit some formulae for zeta functions of local orders given in \cite{Lyn}. What is remarkable about these formulae, is that in many cases of interest, they are infinite products and so computationally, are nicely compatible with Equation~(\ref{eq:localGlobalProductExpansion}), as will be seen in later sections. Our goal in this section, is to introduce a new type of Hecke operator to reprove the local formulae and clarify why they are infinite products. 

Throughout this section, we assume that $R$ is a commutative local noetherian normal domain with finite residue field, and $A$ is an $R$-order in a central simple $K(R)$-algebra. As in \cite{Lyn}, we will look more generally at zeta functions of a finitely generated $A$-module $M$ (these are defined below in (\ref{eq:defMultiParameterZeta})). By default, we work with left modules. 
%Let $M$ be a finite $A$-module. We assume that there exists a regular normal element $z \in \rad(A)$ which acts regularly on $M$. These data will be fixed. Suppose further that $\bar{A}:= A(z)$ is an hereditary order. 

\begin{definition}  \label{def:homogSlice}
We say that a projective $A$-module $M$ has a {\em homogeneous slice} if there exists a regular normal element $z \in \rad(A)$ such that 
%(i) $z$ acts regularly on $M$ and (ii) 
(*) 
every finite colength submodule of $\bar{M}:= M/(z)$ is isomorphic to $\bar{M}$. Here, $\rad(A)$ is the Jacobson radical of $A$. 
\end{definition}

The following enables us to generate important examples in the applications, where homogeneous slices exist.

\begin{proposition}  \label{prop:cofiniteLengthIdealCond}
Let $\bar{R}$ be a commutative noetherian local ring and $\bar{A}$ a finite $\bar{R}$-algebra. Then $\bar{A}$ is a product of maximal orders (in simple artinian rings) over discrete valuation rings if and only if every finite colength left ideal is free of rank 1. 
\end{proposition}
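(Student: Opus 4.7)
My plan is to extract hereditarity of $\bar{A}$ from the hypothesis, and then apply the classical structure theorem for Noetherian semiperfect hereditary orders over local rings. The easy direction $(\Leftarrow)$ reduces componentwise to the case where $\bar{A}$ is a maximal order $\Lambda$ over a DVR. Morita theory then gives $\Lambda \cong M_n(D)$ for a noncommutative DVR $D$, and any finite colength left ideal $I \subseteq \Lambda$ is projective (since $\Lambda$ is hereditary) and of the same generic rank as $\Lambda$, so Krull--Schmidt over the local ring $D$ forces $I \cong \Lambda$ after Morita.

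For the harder direction $(\Rightarrow)$, note that $\bar{A}$ is semiperfect (as a finite algebra over the local ring $\bar{R}$) and $\bar{A}/J$ is finite semisimple, where $J = \rad \bar{A}$. In particular $J$ itself has finite colength, so by hypothesis $J$ is free of rank $1$, hence projective. From the resolution $0 \to J \to \bar{A} \to \bar{A}/J \to 0$ one obtains $\mathrm{projdim}_{\bar{A}}(\bar{A}/J) \leq 1$, and the change-of-rings spectral sequence along $\bar{A} \twoheadrightarrow \bar{A}/J$ collapses because $\bar{A}/J$ is semisimple, yielding $\mathrm{projdim}_{\bar{A}}(M) \leq 1$ for every $\bar{A}/J$-module $M$, in particular every simple $\bar{A}$-module. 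Bounding $\mathrm{gldim}\,\bar{A}$ by projective dimensions of simples shows $\bar{A}$ is hereditary. I would then lift the central primitive idempotents of $\bar{A}/J$ to decompose $\bar{A} = \prod_i \bar{A}_i$ with $\bar{A}_i/J(\bar{A}_i)$ simple Artinian, and invoke the structure of semiperfect Noetherian hereditary rings to obtain $\bar{A}_i \cong M_{n_i}(D_i)$ with $D_i$ local Noetherian hereditary. Morita-translating the hypothesis applied to $J(\bar{A}_i)$ yields $J(D_i) \cong D_i$ as left $D_i$-module, i.e.\ $J(D_i) = D_i \pi_i$ with $\pi_i$ a non-zerodivisor; $\pi_i$ is then non-nilpotent, so $D_i$ is non-Artinian, Krull intersection gives $\bigcap_n J(D_i)^n = 0$, and every non-zero element of $D_i$ is uniquely a unit times a power of $\pi_i$. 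Hence $D_i$ is a domain and a noncommutative DVR, so a maximal order in a division algebra over the DVR $Z(D_i)$, and $\bar{A}_i = M_{n_i}(D_i)$ is a maximal order over $Z(D_i)$.

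The main obstacle is the implication ``$J$ projective $\Rightarrow$ $\bar{A}$ hereditary'' in our non-complete semilocal setting; this relies on the standard but non-trivial fact that $\mathrm{gldim}\,\bar{A} = \sup_S \mathrm{projdim}(S)$ over simple left modules for left Noetherian semiperfect rings, which must be handled carefully in the presence of modules of infinite composition length. A secondary subtlety is verifying that $\pi_i$ is normal, i.e.\ $\pi_i D_i = D_i \pi_i$: the ring endomorphism $\sigma$ of $D_i$ defined by $\pi_i b = \sigma(b) \pi_i$ is injective by the non-zerodivisor property, and I would show its surjectivity by identifying the associated graded ring $\bigoplus_n J(D_i)^n / J(D_i)^{n+1}$ as a twisted polynomial algebra in $\bar{\pi}_i$ over $D_i/J(D_i)$.
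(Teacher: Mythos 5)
Your plan for the direction ``product of maximal orders $\Rightarrow$ finite colength left ideals free of rank~1'' differs slightly from the paper's: you reduce via Morita to a noncommutative DVR $D$ and use Krull--Schmidt over $D$, whereas the paper simply cites Auslander--Goldman to say a maximal order over a complete DVR is a principal ideal ring so $I=\bar Ax$ with $x$ regular. Both routes work.

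The converse is where the real issue lies. You and the paper agree on the opening move: $J=\rad\bar A$ has finite colength, so it is free of rank~1 and hence projective, and then $\bar A$ is hereditary noetherian. (The paper simply cites \cite[Theorem~7.3.14]{MR}; your spectral-sequence sketch is essentially a reproof of part of that theorem, modulo the global-dimension-via-simples point you flag.) The genuine gap is the very next step: ``lift the central primitive idempotents of $\bar A/J$ to decompose $\bar A = \prod_i \bar A_i$ with $\bar A_i/J(\bar A_i)$ simple Artinian.'' This does not work, for three reasons. First, $\bar A$ need not be semiperfect: $\bar R$ is only assumed noetherian local, not complete or henselian. E.g.\ $\bar R=\bZ_{(5)}$, $\bar A = \bZ_{(5)}[i]$ is a finite $\bar R$-algebra satisfying the hypothesis (it is a semilocal PID), but $\bar A$ is a domain with no nontrivial idempotents even though $\bar A/J\cong\bF_5\times\bF_5$; here the correct conclusion is one factor, not two. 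Second, even when idempotents do lift, central idempotents of $\bar A/J$ lift only to (generically non-central) primitive idempotents of $\bar A$. Third, the genuine ring-theoretic block decomposition of a hereditary noetherian semiperfect ring can have blocks whose top is \emph{not} simple: a non-maximal hereditary order such as $\left(\begin{smallmatrix}R&R\\\frakm&R\end{smallmatrix}\right)$ over a DVR $(R,\frakm)$ is indecomposable with $\bar A/J$ a product of two fields. Such examples even have $J$ free of rank~1 (generated by a regular normal element), so the hypothesis must be used \emph{again} on other colength-finite ideals to exclude them, which your sketch does not do. The paper avoids all of this: it invokes the decomposition of a hereditary noetherian ring into hereditary artinian factors and HNP factors (\cite[Theorem~5.4.6]{MR}), discards the artinian pieces using the hypothesis, and then for each HNP factor proves maximality directly by taking any over-order $\bar B$, picking central $r$ with $r\bar B\subseteq\bar A$, noting $r\bar B$ is a finite-colength left ideal so $r\bar B=\bar Ax$ by hypothesis, and computing $\bar B = x^{-1}\bar A x$ by comparing endomorphism rings --- a much more economical use of the hypothesis. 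A secondary weakness in your sketch: you assert $Z(D_i)$ is a DVR and $\bar A_i$ is an order over it without tying $Z(D_i)$ back to $\bar R$ or checking module-finiteness; this is where the paper's step of replacing $\bar R$ with the centre and deducing Krull dimension~one via \cite[Proposition~5.4.5]{MR} does the work.

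Your ``secondary subtlety'' (normality of $\pi_i$) is actually resolvable more directly than by the associated-graded route: from $J=D\pi$ one gets $J^n=D\pi^n$ by $\pi D\subseteq J=D\pi$ together with induction, hence every nonzero element is (unit)$\cdot\pi^n$, and conjugation by $\pi$ is seen to preserve units by comparing $J$-adic valuations, giving $\pi D=D\pi$. But this lives downstream of the flawed decomposition, so repairing it does not repair the proof.
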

\begin{proof}
Suppose first that $\bar{A}$ is a maximal order in a simple artinian ring $Q$. From \cite[Corollary to Proposition~3.3]{AG}, we know $\bar{A}$ is a principal ideal ring so any left ideal $I = Ax$ for some $x$. If $I$ has cofinite length then $Qx = Q$ so $x$ must be regular. This proves the forward implication. 

Conversely, suppose every finite colength left ideal of $\bar{A}$ is free of rank 1. The Jacobson radical $\rad(\bar{A}) \triangleleft \bar{A}$ has cofinite length so is projective. It follows from \cite[Theorem~7.3.14]{MR} that $\bar{A}$ is a hereditary noetherian ring. Now any such ring is a product of hereditary artinian rings and hereditary noetherian prime rings  by \cite[Theorem~5.4.6]{MR}. Furthermore, these factors also satisfy condition~(*) of Definition~\ref{def:homogSlice}. This rules out the artinian case (see for example \cite[Theorem~5.4.7]{MR}).  We may thus assume that $\bar{A}$ is hereditary noetherian prime and not artinian. Replacing $\bar{R}$ with the centre of $\bar{A}$ we may assume that $\bar{R}$ is prime too. Now any non-zero left ideal of $\bar{A}$ has finite colength by \cite[Proposition~5.4.5]{MR}. It follows that the Krull dimension of $\bar{R}$ must be one. Suppose that $\bar{A} \subseteq \bar{B}$ for another order $\bar{B}$ in $K(\bar{R}) \otimes_{\bar{R}} \bar{A}$. Pick a non-zero $r \in \bar{R}$ so that $r\bar{B} \leq \bar{A}$ so say $r \bar{B} = \bar{A} x$ for some $x \in \bar{A}$. Computing endomorphisms in $K(\bar{R}) \otimes_{\bar{R}} \bar{A}$ we find 
$\bar{B} = x^{-1} \bar{A} x
$ so $\bar{A}$ is indeed maximal. 
\end{proof}

\begin{example}   \label{eg:AzumayaHomogSlice}
    Suppose that $R$ is a two-dimensional regular local ring with finite residue field, and suppose that $A$ is an Azumaya $R$-algebra. Pick any $z\in\rad(R)-\rad(R)^2$ so that $R/(z)$ is a discrete valuation ring. Then $A/(z)$ is Azumaya and hence a maximal order over $R/(z)$. By Proposition~\ref{prop:cofiniteLengthIdealCond}, $z$ defines a homogeneous slice for $A$. 
\end{example}

We return to the general setup and assume from now on that $M$ has a homogeneous slice given by $z$ as in Definition~\ref{def:homogSlice}. Suppose that $A$ has, up to isomorphism, $g$ simple modules. The Grothendieck group of the category of finite length $A$-modules is the free abelian group on $g$ generators, say $t_1, \ldots, t_g$ corresponding to the $g$ simples. We will write the Grothendieck group using multiplicative notation so that the positive cone $\cL_0$ corresponds to the set of monomials in the $t_j$. Below, we will need a slight variant of the zeta functions introduced in Section~\ref{sec:backgroundZeta} which will naturally be elements in $\widehat{\bZ \cL_0}:= \bZ[[t_1, \ldots, t_g]]$. For any finite length $A$-module $N$, let $\nu(N) \in \cL_0$ denote its class in the Grothendieck group. 

Let $\cX$ be the set of all finite colength left submodules $X \leq M$, $\bZ \cX := \oplus_{ X \in \cX} \bZ X$ and $\widehat{\bZ \cX}$ its completion, by which we mean the set of all formal (not necessarily finite) linear combinations. There is a well-defined linear map
\begin{equation}  \label{eq:defnNu}
\nu^{\ddagger}\colon \widehat{\bZ \cX}  \longrightarrow \widehat{\bZ \cL_0}\colon X \mapsto \nu(M/X).
\end{equation}
which takes {\em the universal zeta sum} $Z_M:= \sum_{X \in \cX}X$ to the {\em multi-parameter zeta function}
\begin{equation}  \label{eq:defMultiParameterZeta}
    Z_M(t_1,\ldots, t_g) := \nu^{\ddagger}(Z_M).
\end{equation}

Our approach to computing local zeta functions, as in \cite{Lyn} is to use the ``slice'' given by $z$. To this end, consider the $z$-adic filtration on $A$ and $M$. The associated graded ring $\gr A$ is isomorphic to the skew-polynomial ring $\bar{A}[z; \sigma]$ where $\sigma$ is the automorphism of $\bar{A}$ defined by conjugation by $z$. Note that $\sigma$ also permutes the simple $A$-modules and hence the $t_j$. Now $M$ is projective and $z$ regular, so $$\gr M = \bar{M} \oplus (z \otimes \bar{M}) \oplus (z^2 \otimes\bar{M}) \oplus \ldots$$
where $z^i\otimes \bar{M}:=z_iA\otimes_A\bar{M}$. The filtration on $M$ induces filtrations on any subquotient of $M$ and in particular, on any $X \in \cX$. We will also need the Grothendieck group of the category of finite length graded $\gr A$-modules. If $S_j$ is the simple $A$ and hence $\bar{A}$-module corresponding to $t_j$, then we obtain a simple $\gr A$-module $z^i \otimes S_j:=z^iA \otimes_A S_j$ which can alternately be viewed as a copy of $S_j$ living in degree $i$, but where the $\bar{A}$-action is skewed by $\sigma^i$. %The notation is meant to capture this since, if $\bar{a} \in \bar{A}$, then to move it past $z^i$, we have to conjugate $\bar{a}$ by $z^i$. 
Let again $\nu$ denote the map to the Grothendieck group and, abusing notation, we let $z^i(t_j) := \nu(z^i \otimes S_j)$ and $\cL$ be the free commutative monoid generated by $z^i(t_j)$ for $i \in \bN, j = 1,\ldots ,g$. We have the polynomial ring $\bZ \cL$ as well as its completion, the power series ring $\widehat{\bZ \cL} := \varprojlim_n \bZ \cL^{\leq n}$ where $\cL^{\leq n}$ consists of all monomials of degree $\leq n$. We also have a well-defined map 
\begin{equation}
    \nu^\dagger \colon \widehat{\bZ \cX} \to \widehat{\bZ \cL} \colon X \mapsto \nu (\gr M/X)
\end{equation}
There is a natural monoid morphism $z\colon \cL \to \cL \colon z^i(t_j) \mapsto z^{i+1}(t_j)$ which is compatible with the notation for the generators for $\cL$. It also induces (topological) ring homomorphisms $z \colon \bZ \cL \to \bZ \cL$ and $z \colon \widehat{\bZ \cL} \to \widehat{\bZ \cL}$. The maps $\nu^{\ddagger}$ and $\nu^{\dagger}$ are related by 
\begin{equation}
    \nu^{\ddagger} = \rho \nu^{\dagger}
\end{equation}
where $\rho\colon \widehat{\bZ \cL} \to \widehat{\bZ \cL_0}\colon z^i(t_j) \mapsto \sigma^i(t_j)$. Note that $\rho$ corresponds to the restriction of scalars functor induced by the map $\bar{A} \to \gr A$. 

We can now introduce our versions of Hecke operators.
\begin{definition}  \label{def:HeckeOp}
Let $\bar{N}$ be a finite length $\bar{A}$-module. We define the {\em $z$-adic Hecke operator} $T_{\bar{N}} \colon \widehat{\bZ \cX} \to \widehat{\bZ \cX}$ to be the linear map which sends
$$
\cX \ni X \mapsto \sum_{Y<X,\ Y \cap zM = zX, \ \gr(X/Y)_0 \simeq \bar{N}} Y.
$$
We also introduce the operator $T^- \colon \widehat{\bZ \cX} \to \widehat{\bZ \cX} \colon Y \mapsto z^{-1}(Y \cap zM)$ which sends any summand of $T_{\bar{N}} X$ back to $X$. Finally, consider the {\em $z$-adic zeta Hecke operator} $T := \sum_{\bar{N}} T_{\bar{N}}$ where $\bar{N}$ runs over all isomorphism classes of finite length $\bar{A}$-modules. 
\end{definition}

It is worthwhile unravelling the conditions defining the summation above. 

\begin{proposition}  \label{prop:grIgrJ}
Let $Y \in \cX$ and $X = T^-Y = z^{-1}(Y \cap zM)$. Then
\begin{enumerate}
    \item $Y \leq X$.
    \item If $\bar{Y}_i \leq \bar{M}$ are the left submodules such that $$\gr Y = \bar{Y}_0 \oplus (z \otimes \bar{Y}_1) \oplus \ldots$$ then $$\gr X = \bar{Y}_1 \oplus (z \otimes \bar{Y}_2) \oplus \ldots.$$
\end{enumerate}
\end{proposition}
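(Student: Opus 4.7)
The plan is first to rewrite $X$ in a more usable form. Since $z$ is a regular element of $A$ and $M$ is projective, multiplication by $z$ is injective on $M$, and $zm \in zM$ for every $m\in M$, so
\[
X \;=\; z^{-1}(Y \cap zM) \;=\; \{\, m \in M : zm \in Y \,\}.
\]

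With this description, part (1) is immediate: for $y \in Y$ we have $zy \in Y$ since $Y$ is a left $A$-submodule, so $y \in X$.

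For part (2), I filter $X$, $Y$ and $M$ by the restriction of the $z$-adic filtration $F^j M := z^j M$. The key step is to show that multiplication by $z$ restricts to an abelian-group isomorphism
\[
z\cdot\colon F^i X \;\xrightarrow{\ \sim\ }\; F^{i+1} Y \qquad (i \ge 0).
\]
Forward direction: if $m \in X \cap z^i M$ then $zm \in Y \cap z^{i+1}M = F^{i+1}Y$. Surjectivity: given $y \in F^{i+1}Y$, write $y = z^{i+1}m'$; then $m := z^i m' \in z^i M$ satisfies $zm = y \in Y$, so $m \in F^i X$. Injectivity is regularity of $z$ on $M$. Passing to associated graded pieces yields an isomorphism $\bar X_i \cong \bar Y_{i+1}$ of abelian groups.

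The final step is to check that $\bar X_i$ and $\bar Y_{i+1}$ correspond to the same subset of $\bar M$ under the canonical identifications $\phi_j\colon \bar M \xrightarrow{\sim} z^j M/z^{j+1}M$, $m_0 \mapsto z^j m_0$, which are precisely the identifications used implicitly in the decomposition $\gr M = \bigoplus_j z^j \otimes \bar M$. This is the tautology $\phi_{i+1} = z \cdot \phi_i$: under $\phi_i^{-1}$ and $\phi_{i+1}^{-1}$ the multiplication-by-$z$ map $z^i M/z^{i+1}M \to z^{i+1}M/z^{i+2}M$ becomes the identity on $\bar M$. Hence $\bar X_i = \bar Y_{i+1}$ as subsets of $\bar M$, giving $\gr X = \bar Y_1 \oplus (z\otimes \bar Y_2) \oplus \cdots$ as claimed.

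The only mild obstacle I anticipate is keeping track of the $\sigma$-twist in the $\bar A$-action on each graded piece, since multiplication by $z$ is $\sigma$-semilinear rather than $\bar A$-linear in the naive sense. However, this twist is already absorbed into the notation $z^j\otimes(-)$ used in the statement, so once the underlying subsets of $\bar M$ are matched correctly across all $j$ in a uniform way the module structures agree automatically.
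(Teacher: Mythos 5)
Your proof is correct and takes essentially the same route as the paper: the paper notes $zX = Y\cap zM$ and compares associated gradeds directly, while you spell out the equivalent filtration-level bijection $z\cdot\colon F^iX\xrightarrow{\sim}F^{i+1}Y$ and track the canonical identifications with $\bar M$. The extra remarks about $\phi_{i+1}=z\cdot\phi_i$ and the $\sigma$-twist being absorbed into the $z^j\otimes(-)$ notation are sound and just make explicit what the paper leaves implicit.
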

\begin{proof}
Part~(1) follows since $zY  \leq Y \cap zM  = zX$ and part~(2) follows since 
$ \gr zX = \gr (Y \cap zM) = (z \otimes \bar{Y}_1) \oplus (z^2 \otimes \bar{Y}_2) \oplus \ldots
$.
\end{proof}

The next result shows how the $z$-adic zeta operator $T$ can be used to study the universal zeta sum. Below, we let $T^{-\;n}$ denote the $n$-th power of $T^-$, and caution that this is not an inverse to $T^n$.  
\begin{proposition} \label{prop:TandZetaSum}
\begin{enumerate}
    \item Let $X \in \cX$ and $n \in \bN$ be any integer such that $\gr(M/X)_n = 0$ (note such $n$ exist). Then $T^{-\;n} X = M$. 
    \item If $Y\in \cX$ and $X = T^-Y$, then $Y$ is a summand of $T_{\bar{N}} X$ where $\bar{N} = \gr(X/Y)_0$ and is not a summand of any other $T_{\bar{N}'}X'$. 
    \item $T^n M = \sum_{T^{-n}X = M} X$.
\end{enumerate}
\end{proposition}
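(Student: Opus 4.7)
The plan is to handle the three parts in sequence, with Proposition~\ref{prop:grIgrJ} providing the main technical input throughout.

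For Part~(1), I would argue as follows. Any $X \in \cX$ inherits the $z$-adic filtration from $M$, and $\gr X \hookrightarrow \gr M$ corresponds to a sequence of submodules $\bar X_i \leq \bar M$ with $\gr X = \bigoplus_i z^i \otimes \bar X_i$. The induced short exact sequence $0 \to \gr X \to \gr M \to \gr(M/X) \to 0$ shows $\gr(M/X)_i \simeq \bar M/\bar X_i$, so the hypothesis $\gr(M/X)_n = 0$ amounts to $z^n M \leq X$, which in turn forces $\bar X_j = \bar M$ for all $j \geq n$. The existence of such an $n$ follows from $X$ having finite colength (some power of $z$ annihilates $M/X$). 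Iterating Proposition~\ref{prop:grIgrJ}(2) gives $\gr(T^{-n}X)_i = z^i \otimes \bar X_{i+n} = z^i \otimes \bar M = \gr M_i$ for every $i \geq 0$, so $T^{-n}X \leq M$ has the same $z$-adic filtration as $M$, forcing equality.

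For Part~(2), I first verify that $Y$ appears in $T_{\bar N} X$ with $\bar N = \gr(X/Y)_0$. Proposition~\ref{prop:grIgrJ}(1) gives $Y \leq X$; the regularity of $z$ gives $zX = z \cdot z^{-1}(Y \cap zM) = Y \cap zM$, which is exactly the condition $Y \cap zM = zX$; and the third condition $\gr(X/Y)_0 \simeq \bar N$ holds tautologically. For uniqueness, suppose $Y$ is a summand of $T_{\bar N'}X'$. The defining relation $Y \cap zM = zX'$ together with regularity of $z$ forces $X' = z^{-1}(Y \cap zM) = T^-Y = X$, and consequently $\bar N' = \gr(X'/Y)_0 = \gr(X/Y)_0 = \bar N$.

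For Part~(3), I would proceed by induction on $n$, with the base case $n=0$ being immediate since both sides equal $M$. For the inductive step, linearity of $T$ together with the inductive hypothesis yield
\[
T^n M \;=\; T\bigl(T^{n-1}M\bigr) \;=\; \sum_{T^{-(n-1)}X=M} TX \;=\; \sum_{T^{-(n-1)}X=M}\ \sum_{\bar N} T_{\bar N}X.
\]
By Part~(2), as $\bar N$ and the summands of each $T_{\bar N}X$ vary, the terms on the right enumerate precisely those $Y \in \cX$ with $T^-Y = X$, each with multiplicity one. Re-indexing by $Y$ and using $T^{-n} = T^- \circ T^{-(n-1)}$ collapses the double sum into $\sum_{T^{-n}Y=M}Y$.

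The main obstacle is really the bookkeeping in Part~(1): tracking how the graded decomposition of $X$ shifts under iterated $T^-$, which is precisely what Proposition~\ref{prop:grIgrJ}(2) is set up to control. Once this is clear, Part~(2) is essentially a formal manipulation of the filtration identities, and Part~(3) reduces to a clean induction whose success hinges on the bijection $(X, \bar N, Y) \leftrightarrow Y$ provided by Part~(2) between triples parametrising summands of $\sum_{\bar N} T_{\bar N}X$ and elements $Y$ with $T^-Y = X$.
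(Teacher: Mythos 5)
Your proof is correct and follows essentially the same route as the paper's (which is extremely terse: it simply cites Proposition~\ref{prop:grIgrJ}(2) plus finiteness of $\gr(M/X)$ for (1), Proposition~\ref{prop:grIgrJ}(1) plus the definition for (2), and induction for (3)). Your write-up fills in exactly the bookkeeping the paper suppresses, and the key observations --- the shift of filtration under $T^-$, the identity $z\cdot z^{-1}(Y\cap zM)=Y\cap zM$ forcing $Y\cap zM = zX$, and the bijection between summands of $\sum_{\bar N}T_{\bar N}X$ and $Y$ with $T^-Y=X$ --- are all present and correctly used. One small imprecision in Part~(1): you assert that $\gr(M/X)_n=0$ \emph{amounts to} $z^nM\le X$, but the degree-$n$ piece vanishing alone only gives $\bar X_n=\bar M$; to pass to $z^nM\le X$ you implicitly use the monotonicity $\bar X_j\subseteq\bar X_{j+1}$ (which holds because $zX\subseteq X$ forces $z^jm\in X\Rightarrow z^{j+1}m\in X$) together with finite length of $M/X$. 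Since the only fact you actually need downstream is $\bar X_j=\bar M$ for $j\ge n$, it is cleaner to deduce that directly from $\bar X_n=\bar M$ and the monotonicity, bypassing the claim about $z^nM$. This does not change the substance of the argument.
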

\begin{proof} Part (1) follows from Proposition~\ref{prop:grIgrJ}(2) and the fact that $M/X$ and hence $\gr(M/X)$ have finite length. Part~(2) follows from Proposition~\ref{prop:grIgrJ}(1) and the definition of the Hecke operator. Finally, part~(3) follows from part~(2) by induction. 
\end{proof}

To use this proposition, we need to track how $T$ operates on $\widehat{\bZ \cL}$ as per the following definition.

\begin{definition}  \label{def:degree}
Let $U \colon \widehat{\bZ \cX} \to \widehat{\bZ \cX}$ be a linear map. We say that $U$ {\em has a degree map $\deg U$} if there is a continuous linear map $\deg U \colon \widehat{\bQ \cL } \to \widehat{\bQ \cL}$ making the diagram below commute.
$$\begin{CD}
\widehat{\bZ \cX} @>U>> & \widehat{\bZ \cX}\\
@V{\nu^{\dagger}}VV & @VV{\nu^{\dagger}}V \\
\widehat{\bQ \cL } @>{\deg U}>>  & \widehat{\bQ \cL}
\end{CD}$$
\end{definition}

We need some lemmas to show that the $z$-adic Hecke operators have degree maps and to compute them. 
\begin{lemma}  \label{lem:chiMultiplicative}
The function that assigns to a finite length $\bar{A}$-module $\bar{N}$ the cardinality $|\Hom_{\bar{A}}(\bar{M},\bar{N})|=: \chi_{\bar{M}}(\bar{N})$, descends to a multiplicative map of monoids $\chi_{\bar{M}}\colon \cL_0 \to \bN$. 
\end{lemma}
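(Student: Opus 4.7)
The plan is to exploit the projectivity of $\bar{M}$ over $\bar{A}$, which is a direct consequence of our hypotheses: since $M$ is projective over $A$ and $z$ is regular, $\bar{M} = M/zM = M \otimes_A \bar{A}$ is projective over $\bar{A} = A/(z)$. Once this is in hand, the lemma reduces to the standard fact that cardinality is multiplicative in short exact sequences of finite abelian groups.

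In more detail, I would first verify that $\chi_{\bar{M}}(\bar{N})$ is actually finite for every finite length $\bar{A}$-module $\bar{N}$. Since $A$ is a finite $R$-algebra, $M$ is finitely generated; choosing generators $m_1, \ldots, m_k$ of $\bar{M}$, any $\bar{A}$-linear map $\bar{M} \to \bar{N}$ is determined by its values on the $m_i$. Because $R$ has finite residue field and $\bar{N}$ has finite length over $\bar{A}$, $\bar{N}$ is a finite set, so $|\Hom_{\bar{A}}(\bar{M},\bar{N})| \leq |\bar{N}|^k < \infty$.

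Next, I would apply the functor $\Hom_{\bar{A}}(\bar{M}, -)$ to a short exact sequence $0 \to \bar{N}' \to \bar{N} \to \bar{N}'' \to 0$ of finite length $\bar{A}$-modules. Projectivity of $\bar{M}$ gives an exact sequence of finite abelian groups
\begin{equation*}
0 \to \Hom_{\bar{A}}(\bar{M}, \bar{N}') \to \Hom_{\bar{A}}(\bar{M}, \bar{N}) \to \Hom_{\bar{A}}(\bar{M}, \bar{N}'') \to 0,
\end{equation*}
and multiplicativity of cardinality yields $\chi_{\bar{M}}(\bar{N}) = \chi_{\bar{M}}(\bar{N}') \cdot \chi_{\bar{M}}(\bar{N}'')$. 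In particular, iterating on a composition series shows that $\chi_{\bar{M}}(\bar{N})$ depends only on the class $\nu(\bar{N}) \in \cL_0$, so $\chi_{\bar{M}}$ descends to a function $\cL_0 \to \bN$. Finally, since $\cL_0$ is the free commutative monoid on the generators $t_1, \ldots, t_g$ coming from simple modules, multiplicativity on $\cL_0$ follows from the computed multiplicativity on direct sums of simples.

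There is no real obstacle here; the only subtle step is the observation that $\bar{M}$ is projective over $\bar{A}$, which makes $\Hom_{\bar{A}}(\bar{M}, -)$ exact on finite length modules and thereby converts additivity on the Grothendieck group into the desired multiplicativity on $\cL_0$.
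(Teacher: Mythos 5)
Your proof is correct and follows the same route as the paper: the paper's one-line argument invokes the homogeneous slice hypothesis to assert that $\Hom_{\bar{A}}(\bar{M},-)$ is exact, and you correctly unpack this as coming from the projectivity of $M$ over $A$ (part of Definition~\ref{def:homogSlice}), which passes to projectivity of $\bar{M}=M\otimes_A\bar{A}$ over $\bar{A}$. The remaining steps (finiteness, multiplicativity of cardinality on short exact sequences, descent to $\cL_0$) are exactly what the paper intends, just written out in more detail.
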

\begin{proof}
Our homogeneous slice hypothesis ensures that $\Hom(\bar{M},-)$ is exact and cardinality sends extensions of groups to products. 
\end{proof}

We need to count the summands which appear in any $T_{\bar{N}}X$. We use the argument in \cite[Lemma~3.4.4, Proposition~3.5.4]{Lyn} for a similar problem. 
\begin{lemma}  \label{lem:countingLifts}
Let $X \in \cX$ and $\bar{N}$ be a finite length $\bar{A}$-module. Let $\gr X = \bar{X}_0 \oplus (z \otimes \bar{X}_1) \oplus \ldots$ and $a_{\bar{N}}$ be the number of submodules $\bar{Y}_0$ of $\bar{X}_0$ such that $\bar{X}_0/\bar{Y}_0 \simeq \bar{N}$. Then the number of summands in $T_{\bar{N}}X$ is
$$a_{\bar{N}} \prod_{i \geq 1} \chi_{\bar{M}}(z^i\otimes (\bar{X}_{i}/\bar{X}_{i-1}))
$$
\end{lemma}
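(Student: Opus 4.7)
The plan is to parametrize the summands $Y$ of $T_{\bar{N}} X$ in two stages: first by a submodule $\bar{Y}_0 \leq \bar{X}_0$, which will give the $a_{\bar{N}}$ factor, and then, for each such $\bar{Y}_0$, by a lift back into $X$, which produces the infinite product. The defining condition $Y \cap zM = zX$ is equivalent to $zX \subseteq Y$ together with injectivity of the induced map $Y/zX \hookrightarrow \bar{M}$; writing $\bar{Y}_0$ for its image, one has $\bar{Y}_0 \subseteq \bar{X}_0$ since $Y \subseteq X$. A direct computation then yields
\[
\gr(X/Y)_0 \;=\; X/\bigl((X \cap zM) + Y\bigr) \;\simeq\; \bar{X}_0/\bar{Y}_0,
\]
so the constraint $\gr(X/Y)_0 \simeq \bar{N}$ translates into $\bar{X}_0/\bar{Y}_0 \simeq \bar{N}$, of which there are $a_{\bar{N}}$ choices by definition.

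For a fixed $\bar{Y}_0$, the next step is to count the lifts. Write $\tilde{X} := X/zX$, $\pi\colon \tilde{X} \twoheadrightarrow \bar{X}_0$ for the natural projection and $K := \ker\pi = (X\cap zM)/zX$. A valid $Y$ is precisely a submodule of $\tilde{X}$ mapping isomorphically onto $\bar{Y}_0$, equivalently a splitting of
\[
0 \to K \to \pi^{-1}(\bar{Y}_0) \to \bar{Y}_0 \to 0.
\]
The homogeneous slice hypothesis forces $\bar{X}_0$ and $\bar{Y}_0$ to be cofinite submodules of $\bar{M}$ and hence isomorphic to $\bar{M}$; in particular they are projective $\bar{A}$-modules, so a splitting exists and the set of splittings is a torsor under $\Hom_{\bar{A}}(\bar{Y}_0, K)$. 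There are therefore $|\Hom_{\bar{A}}(\bar{Y}_0, K)|$ lifts.

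Finally, I compute $|\Hom_{\bar{A}}(\bar{Y}_0, K)|$ by filtering $K$ with $K_i$ the image of $X \cap z^i M$ in $K$ for $i \geq 1$. Using $zX \cap z^i M = z(X \cap z^{i-1} M)$, one identifies $K_i/K_{i+1}$ with $z^i \otimes (\bar{X}_i/\bar{X}_{i-1})$; here the inclusion $\bar{X}_{i-1} \subseteq \bar{X}_i$ inside $\bar{M}$ arises from $z(X \cap z^{i-1}M) \subseteq X \cap z^i M$, and the filtration terminates because $z^N M \subseteq X$ for $N$ large. Since $\bar{M}$ is projective, $\Hom_{\bar{A}}(\bar{M}, -)$ is exact on finite length modules, so Lemma~\ref{lem:chiMultiplicative} combined with $\bar{Y}_0 \simeq \bar{M}$ gives
\[
|\Hom_{\bar{A}}(\bar{Y}_0, K)| \;=\; \prod_{i \geq 1} \chi_{\bar{M}}\!\bigl(z^i \otimes (\bar{X}_i/\bar{X}_{i-1})\bigr).
\]
Multiplying by $a_{\bar{N}}$ gives the stated formula.

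The main obstacle is the bookkeeping in the last paragraph: identifying $K_i/K_{i+1}$ with $z^i \otimes (\bar{X}_i/\bar{X}_{i-1})$ compatibly with the $\sigma^i$-twist implicit in the notation requires a careful unwinding of the embedding $\gr X_i \hookrightarrow z^i \otimes \bar{M}$, and one must verify that $\bar{X}_{i-1}$ genuinely sits inside $\bar{X}_i$ as an $\bar{A}$-submodule of $\bar{M}$, not merely as an abstract subquotient.
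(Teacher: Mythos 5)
Your proof is correct and follows essentially the same route as the paper's: reduce the count to (i) choosing $\bar{Y}_0 \leq \bar{X}_0$ with quotient $\bar{N}$, then (ii) counting lifts to $X/zX$, observing that the relevant short exact sequence splits by projectivity (you invoke projectivity of $\bar{Y}_0$, the paper of $\bar{X}_0$ — both are $\simeq \bar M$ by the homogeneous slice hypothesis), so the lifts form a torsor under $\Hom_{\bar A}(\bar Y_0, K)$ with $K=(X\cap zM)/zX$, and finally (iii) evaluating $|\Hom_{\bar A}(\bar M, K)|=\chi_{\bar M}(K)$ via Lemma~\ref{lem:chiMultiplicative} by replacing $K$ with its associated graded. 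Your filtration $K_i=$ image of $X\cap z^iM$ is exactly the one the paper implicitly uses when passing from $K$ to $\gr(X\cap zM)/\gr(zX)$, and your worry about the $\sigma^i$-twist on the identification $K_i/K_{i+1}\simeq z^i\otimes(\bar X_i/\bar X_{i-1})$ is a legitimate point that the paper elides, but it is a routine check. No gaps.
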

\begin{proof}
Pick $\bar{Y}_0\leq \bar{X}_0$ as above. Consider the diagram below whose row is a short exact sequence and the vertical map is inclusion.
$$\begin{CD}
     @. @. @. \bar{Y}_0 @. \\
    @.  @. @.  @VV{\iota}V  \\
    0 @>>>  \frac{X \cap zM}{zX}@>>> \frac{X}{zX}@>>> \frac{X}{X \cap zM} = \bar{X}_0@>>> 0
\end{CD}
$$
Note first that by our homogeneous slice hypothesis, $\bar{X}_0$ is projective so the short exact sequence splits. Let $Y$ be a summand of $T_{\bar{N}}X$ corresponding to $Y \leq X$ with $(\gr Y)_0 = \bar{Y}_0$. Note that $Y \geq zX$ so they are characterised by the submodules $Y/zX \leq X/zX$ whose image in $\bar{X}_0$ is $\bar{Y}_0$ and intersection with $\frac{X \cap zM}{zX}$ is trivial. These are in one to one correspondence with lifts of $\iota$ to $\tilde{\iota} \colon \bar{Y}_0 \to X/zX$, the correspondence given by sending $\tilde{\iota}$ to its image. Such lifts exist since the row splits, and fixing any particular one, all others can be obtained by adding an arbitrary homomorphism in $\Hom(\bar{Y}_0,\frac{X \cap zM}{zX})$. Our homogeneous slice hypothesis ensures that $\bar{Y}_0 \simeq \bar{M}_0$ so the number of summands we are counting is thus $a_{\bar{N}}\chi_{\bar{M}}(\frac{X \cap zM}{zX})$. By Lemma~\ref{lem:chiMultiplicative}, we may replace $\frac{X \cap zM}{zX}$ with its associated graded. Since $\gr (X \cap zM) = (z \otimes \bar{X}_1) \oplus (z^2 \otimes \bar{X}_2) \ldots$ and $\gr zX = z \otimes \gr X$, we are done. 
\end{proof}

\begin{notation}  \label{notn:Xi}
Given any monomial $m \in \cL$ we write $m = m_0m_{>0}$ where $m_0$ is the product of all the generators $t_1, \ldots,t_g$ appearing in $m$, and $m_{>0}$ is the product of all the other generators $z^i(t_j), i >0$. Recall our monoid morphism $z \colon \cL \to \cL$. Consider the multiplicative map
$$ \Xi \colon \cL \to \bQ \cL \colon m \mapsto \frac{\chi_{\bar{M}}(z(m))}{\chi_{\bar{M}}(m_{>0})} m_0 z(m)
$$
which extends to a continuous ring homomorphism $\Xi \colon \widehat{\bQ \cL} \to \widehat{\bQ \cL}$. 
\end{notation}

Note that $\bar{A}$ has the ``same'' simples as $A$ so the zeta function of the $\bar{A}$-module $\bar{M}$ is also naturally a power series in $t_1, \ldots, t_g$ too. 
\begin{proposition}  \label{prop:degT}
The $z$-adic Hecke operator $T_{\bar{N}}$ has a degree given by $\deg T_{\bar{N}} = a_{\bar{N}} \nu(\bar{N}) \Xi$ where $a_{\bar{N}}$ is the number of submodules $\bar{Y}$ of $\bar{M}$ with $\bar{M}/\bar{Y} \simeq \bar{N}$. Hence $\deg T = Z_{\bar{M}}(t_1,\ldots,t_g)\Xi$.
\end{proposition}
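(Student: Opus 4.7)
The plan is to verify on each basis element $X \in \cX$ the identity
$\nu^{\dagger}(T_{\bar{N}}X) = a_{\bar{N}}\,\nu(\bar{N})\,\Xi(\nu^{\dagger}(X))$,
and then extend by linearity and continuity to obtain the commutative square of Definition~\ref{def:degree}; the formula for $\deg T$ will follow by summing over $\bar{N}$. I would begin by unpacking $m := \nu^{\dagger}(X)$: writing $\gr X = \bar{X}_0 \oplus (z\otimes \bar{X}_1) \oplus \cdots$ and $\mu_i := \nu(\bar{M}/\bar{X}_i) \in \cL_0$, one has $m = \mu_0 \prod_{i \geq 1} z^i(\mu_i)$, so $m_0 = \mu_0$, $m_{>0} = \prod_{i\geq 1} z^i(\mu_i)$ and $z(m) = \prod_{i\geq 0} z^{i+1}(\mu_i)$. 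Next, for each summand $Y$ of $T_{\bar{N}}X$, Proposition~\ref{prop:grIgrJ}(2) (applied with $X = T^-Y$) forces $\bar{Y}_j = \bar{X}_{j-1}$ for $j \geq 1$, while the defining conditions of $T_{\bar{N}}$ pin $\bar{Y}_0$ to a submodule of $\bar{X}_0$ with quotient isomorphic to $\bar{N}$; the homogeneous slice hypothesis gives $\bar{X}_0 \cong \bar{M}$, so there are exactly $a_{\bar{N}}$ such $\bar{Y}_0$'s. A direct computation then yields
$\nu^{\dagger}(Y) = (\mu_0\,\nu(\bar{N}))\prod_{j\geq 1} z^j(\mu_{j-1}) = \nu(\bar{N})\,\mu_0\,z(m)$,
a value independent of $Y$. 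Combined with the summand count of Lemma~\ref{lem:countingLifts}, this gives
$\nu^{\dagger}(T_{\bar{N}}X) = a_{\bar{N}}\,\nu(\bar{N})\,\mu_0\,z(m)\prod_{i\geq 1} \chi_{\bar{M}}(z^i\otimes \bar{X}_i/\bar{X}_{i-1})$.

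The crux, and the step I anticipate requiring the most care, is matching this product of $\chi_{\bar{M}}$-factors against the ratio $\chi_{\bar{M}}(z(m))/\chi_{\bar{M}}(m_{>0})$ appearing in $\Xi$. The filtration $F_iX = X \cap z^iM$ produces, after dividing by the shared power $z^i$, an ascending chain $\bar{X}_0 \subseteq \bar{X}_1 \subseteq \cdots \subseteq \bar{M}$, and the short exact sequences $0 \to \bar{X}_i/\bar{X}_{i-1} \to \bar{M}/\bar{X}_{i-1} \to \bar{M}/\bar{X}_i \to 0$ give the identity $\mu_{i-1} = \mu_i \cdot \nu(\bar{X}_i/\bar{X}_{i-1})$ in $\cL_0$. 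Applying the monoid morphism $z^i$ and multiplying over $i \geq 1$ then telescopes to $z(m)/m_{>0} = \prod_{i\geq 1}z^i(\nu(\bar{X}_i/\bar{X}_{i-1}))$ in the group completion of $\cL$. Multiplicativity of $\chi_{\bar{M}}$ (Lemma~\ref{lem:chiMultiplicative}) then converts this into the required equality of counting factors, finishing the identity $\nu^{\dagger}(T_{\bar{N}}X) = a_{\bar{N}}\,\nu(\bar{N})\,\Xi(m)$ on basis elements.

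Finally, to pass from $T_{\bar{N}}$ to $T$, I would sum the established identity over all isomorphism classes of finite length $\bar{A}$-modules $\bar{N}$. The coefficient $\sum_{\bar{N}} a_{\bar{N}}\,\nu(\bar{N})$ reindexes as a sum over finite colength submodules $\bar{Y} \leq \bar{M}$ weighted by $\nu(\bar{M}/\bar{Y})$, which is by definition the multi-parameter zeta function $Z_{\bar{M}}(t_1,\ldots,t_g)$ from (\ref{eq:defMultiParameterZeta}), yielding $\deg T = Z_{\bar{M}}(t_1,\ldots,t_g)\,\Xi$.
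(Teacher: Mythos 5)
Your argument is correct and follows the same route as the paper: reduce to basis elements $X\in\cX$, use Proposition~\ref{prop:grIgrJ}(2) to pin down $\nu^{\dagger}(Y)=\nu(\bar{N})m_0z(m)$ for each summand $Y$, invoke Lemma~\ref{lem:countingLifts} for the summand count, and then sum over $\bar{N}$ for the second assertion. You additionally make explicit the telescoping identity $z(m)/m_{>0}=\prod_{i\geq 1}z^i(\nu(\bar{X}_i/\bar{X}_{i-1}))$ and its conversion via Lemma~\ref{lem:chiMultiplicative}, which the paper leaves implicit in the final ``That lemma completes the proof"; this is a helpful clarification but not a different method.
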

\begin{proof}
The second statement follows from the first so we prove the formula for $\deg T_{\bar{N}}$. Let $X \in \cX$ and $Y$ be a summand of $\deg T_{\bar{N}} X$. Then from Proposition~\ref{prop:grIgrJ}(2) and the definition of $T_{\bar{N}} X$, we know that $\nu^{\dagger}(Y) = \nu(\bar{N})m_0 z(m)$ where $m = \nu^{\dagger}(X)$. Finally, note that $(\gr X)_0 \simeq \bar{M}$ by our homogeneous slice hypothesis, so $a_{\bar{N}}$ defined above is the same as the $a_{\bar{N}}$ defined in Lemma~\ref{lem:countingLifts} counting the number of summands of $T_{\bar{N}} X$. That lemma completes the proof. 
\end{proof}

\begin{theorem}\label{thm:Sean}
Suppose the finitely generated projective $A$-module $M$ has a homogeneous slice given by $z \in \rad(A)$. Then with the notation above
$$ Z_M(t_1,\ldots,t_g) = \prod_{j=0}^{\infty} Z_{\bar{M}}(\Xi_j(t_1),\ldots, \Xi_j(t_g)).
$$
where $\Xi_j(t_i):= \rho \Xi^j(t_i)$.
\end{theorem}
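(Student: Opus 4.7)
The plan is to express the universal zeta sum $Z_M \in \widehat{\bZ\cX}$ as a limit of iterates $T^n M$ of the $z$-adic zeta Hecke operator, transfer the computation to $\widehat{\bQ\cL}$ via the degree map, exploit that $\deg T$ is the composition of multiplication by $Z_{\bar M}(t_1,\ldots,t_g)$ with the ring endomorphism $\Xi$, and finally apply $\rho$ to land in $\widehat{\bZ\cL_0}$.

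First I would combine parts~(1) and~(3) of Proposition~\ref{prop:TandZetaSum} to prove $Z_M = \lim_{n\to\infty} T^n M$ in the product topology on $\widehat{\bZ\cX}$. For each $X \in \cX$, part~(1) produces some $N$ with $T^{-n}X = M$ for $n \geq N$ (and this persists, since $T^- M = z^{-1}(M \cap zM) = M$). By part~(3) the coefficient of $X$ in $T^n M$ is $0$ until it stabilises at $1$. The map $\nu^{\dagger}$ is continuous for this topology: only finitely many $X \in \cX$ give $\nu^{\dagger}(X)$ of bounded total degree, because such an $X$ must contain a fixed power $\rad(R)^k M$, which has finite index in $M$ since $R$ has finite residue field and $M$ is finitely generated. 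Hence
$$\nu^{\dagger}(Z_M) = \lim_{n\to\infty}(\deg T)^n\bigl(\nu^{\dagger}(M)\bigr) = \lim_{n\to\infty}(\deg T)^n(1),$$
where $\nu^{\dagger}(M)=\nu(\gr 0) = 1$.

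Next I would iterate $\deg T$. By Proposition~\ref{prop:degT}, $\deg T$ acts as $f \mapsto Z_{\bar M}(t_1,\ldots,t_g)\cdot \Xi(f)$. Since $\Xi$ is a continuous ring endomorphism of $\widehat{\bQ\cL}$, a straightforward induction gives
$$(\deg T)^n(1) = \prod_{j=0}^{n-1}\Xi^j\!\bigl(Z_{\bar M}(t_1,\ldots,t_g)\bigr) = \prod_{j=0}^{n-1} Z_{\bar M}\bigl(\Xi^j(t_1),\ldots,\Xi^j(t_g)\bigr),$$
the second equality because $\Xi^j$, being a continuous ring homomorphism, commutes with substitution into the power series $Z_{\bar M}$. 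Taking the limit and applying the continuous ring homomorphism $\rho$ yields
$$Z_M(t_1,\ldots,t_g) = \nu^{\ddagger}(Z_M) = \rho\,\nu^{\dagger}(Z_M) = \prod_{j=0}^{\infty} Z_{\bar M}\bigl(\rho\Xi^j(t_1),\ldots,\rho\Xi^j(t_g)\bigr) = \prod_{j=0}^{\infty} Z_{\bar M}\bigl(\Xi_j(t_1),\ldots,\Xi_j(t_g)\bigr),$$
using $\nu^{\ddagger} = \rho\nu^{\dagger}$ and pushing $\rho$ through the substitution.

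The main obstacle is the bookkeeping around the various topologies: verifying that $\nu^{\dagger}$ and $\rho$ are continuous with respect to the relevant product topologies, and that ring-homomorphism identities like $\Xi(Z_{\bar M}(t_1,\ldots,t_g)) = Z_{\bar M}(\Xi(t_1),\ldots,\Xi(t_g))$ remain valid for the power series $Z_{\bar M}$. Each reduces to a routine continuity check, but the interchange of the infinite product with $\rho$ (and of $\Xi^j$ with substitution) must be justified carefully; everything else amounts to assembling Propositions~\ref{prop:TandZetaSum} and~\ref{prop:degT} in the correct order.
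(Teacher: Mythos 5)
Your proof is correct and is essentially the paper's argument: express the universal zeta sum as $\lim_{n\to\infty} T^n M$ via Proposition~\ref{prop:TandZetaSum}, compute $\nu^{\ddagger}(T^n M)=\rho(\deg T)^n 1$ with Proposition~\ref{prop:degT} and the fact that $\Xi$ is a continuous ring homomorphism, and pass to the limit. You merely spell out the convergence bookkeeping (stabilisation of the coefficients of $T^nM$ and continuity of $\nu^{\dagger}$) that the paper compresses into ``taking the limit as $n\to\infty$ completes the proof.''
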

\begin{proof}
Let us abbreviate $(t_1,\ldots, t_g)$ to $\vec{t}$. We use Proposition~\ref{prop:degT} and the fact that $\Xi$ is a continuous ring homomorphism to see
\begin{multline*}
\nu^{\ddagger} (T^n M) = \rho (\deg T)^n 1 = \rho (Z_{\bar{M}}(\vec{t}) \Xi)^n 1 \\ 
= \rho Z_{\bar{M}}(\vec{t})\Xi(Z_{\bar{M}}(\vec{t})) \ldots \Xi^{n-1}(Z_{\bar{M}}(\vec{t})) \\
= Z_{\bar{M}}(\vec{t})Z_{\bar{M}}(\Xi_1(\vec{t}))\ldots Z_{\bar{M}}(\Xi_{n-1}(\vec{t}))
\end{multline*}
Taking the limit as $n \to \infty$ completes the proof. 
\end{proof}

\begin{remark}
The above theorem is a special case of \cite[Theorem~3.6.1]{Lyn} which can also be readily proved as above, using variants of the $z$-adic Hecke operator. We will have no need of this generalisation which requires introducing quite a bit more notation.
\end{remark}

We specialise now to local zeta functions for orders, our objects of primary interest. 

\begin{corollary}
\label{cor:SeansLocalThm}
Let $A$ be an $R$-order with a homogeneous slice given by $z \in \rad(A)$ and suppose that $A/\rad (A) \simeq \prod^m M_r(\bF_q)$. Then
$$\zeta_A(s) = \prod^{\infty}_{n=1} \prod_{j=1}^r (1 - q^{nr-j}q^{-nrs})^{-m} = \prod^{\infty}_{n=1} \prod_{j=1}^r (1 - q^{-j}(qt)^{nr})^{-m}
$$
where $t = q^{-s}$. 
\end{corollary}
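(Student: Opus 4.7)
The plan is to specialise Theorem~\ref{thm:Sean} to the case $M = A$ and then to pass from the resulting multi-parameter zeta function to the single-parameter $\zeta_A$ via the substitution $t_j \mapsto t^r$. The hypothesis supplies a homogeneous slice $z \in \rad(A)$, and Proposition~\ref{prop:cofiniteLengthIdealCond} then identifies $\bar{A}$ as a product of $m$ maximal orders over discrete valuation rings. Since $\bar{A}/\rad\bar{A} \simeq A/\rad A \simeq \prod^m M_r(\bF_q)$, each factor has residue field $\bF_q$ and degree $r$; in particular there are $g=m$ isomorphism classes of simple $A$-modules $S_1,\ldots,S_m$, each of $\bF_q$-dimension $r$, so that $\chi_{\bar A}(t_j)=|S_j|=q^r$ for every $j$.

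The next task is to unwind $\Xi_n = \rho\,\Xi^n$ on the generators $t_j$. Using Notation~\ref{notn:Xi} and the fact that $\chi_{\bar A}$ takes the constant value $q^r$ on every generator of $\cL$, I compute $\Xi(t_j) = q^r\, t_j\, z^1(t_j)$ and $\Xi(z^i(t_j)) = z^{i+1}(t_j)$ for $i\geq 1$ (the ratio $\chi_{\bar M}(z(m))/\chi_{\bar M}(m_{>0})$ being either $q^r$ or $1$ accordingly). A short induction then yields
$$\Xi^n(t_j) = q^{rn}\, t_j\, z^1(t_j)\cdots z^n(t_j),$$
so that $\Xi_n(t_j) = q^{rn}\, t_j\, \sigma(t_j)\cdots\sigma^n(t_j)$. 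Hey's formula applied to each factor of $\bar A$ gives the multi-parameter zeta function of $\bar A$ in the form
$$Z_{\bar A}(t_1,\ldots,t_m) = \prod_{i=1}^m \prod_{j=0}^{r-1}(1 - q^j t_i)^{-1}.$$

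I then insert these ingredients into Theorem~\ref{thm:Sean} and pass to $Z_A(t)$ by the substitution $t_j \mapsto t^r$, which correctly tracks $\bF_q$-dimension because every simple is $r$-dimensional. The uniformity of this substitution is the essential simplification: it forces $\Xi_n(t_j) \mapsto q^{rn}t^{r(n+1)}$ \emph{independently of $j$}, so the permutation $\sigma$ disappears, and each factor collapses to
$$Z_{\bar A}\bigl(\Xi_n(t_1),\ldots,\Xi_n(t_m)\bigr)\Big|_{t_k \mapsto t^r} = \prod_{j=0}^{r-1}\bigl(1 - q^{j+rn}\, t^{r(n+1)}\bigr)^{-m}.$$
Taking the product over $n \geq 0$, reindexing by $n' = n+1$ and $j' = r-j$, and setting $t = q^{-s}$ yields the claimed formula. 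The only real obstacle is the bookkeeping for $\Xi_n$; the pleasant feature is that the $\sigma$-action washes out precisely because all simples share the same $\bF_q$-dimension $r$, which is what collapses the multi-parameter expression into the clean product stated in the corollary.
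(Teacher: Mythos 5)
Your proof is correct and follows essentially the same route as the paper's: specialise Theorem~\ref{thm:Sean} with $M=A$, use Proposition~\ref{prop:cofiniteLengthIdealCond} and Hey's formula to get $Z_{\bar A}$, compute $\Xi^n(t_j)=q^{rn}\,t_j\,z(t_j)\cdots z^n(t_j)$ by induction, then apply the substitution $t_j\mapsto t^r$ and reindex. Your explicit observation that the $\sigma$-action washes out under this substitution because all simples have the same $\bF_q$-dimension $r$ is a welcome bit of added exposition, but the argument is the one in the paper.
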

\begin{proof}
We use the notation above so $\bar{A}:=A/(z)$. Now by the generalised version of Hey's formula given in \cite[Theorem~2.3.3]{Lyn}, the homogeneous slice condition ensures
\begin{equation}  \label{eq:Hey}
 Z_{\bar{A}}(t_1,\ldots,t_m) = \prod_{i=1}^m \prod_{j=0}^{r-1} (1-q^j t_i)^{-1}.  
\end{equation}
Alternatively, one can use Proposition~\ref{prop:cofiniteLengthIdealCond} and the Wedderburn decomposition of $A/\rad{A}$, to see that $\bar{A}$ is the product of $m$ maximal orders. Then (\ref{eq:Hey}) is just Hey's formula, c.f. \cite[Eqn.~3.5]{BR87}. 

In this case $\chi_{\bar{A}}$ is just the cardinality function so $\chi_{\bar{A}}(z^j(t_i)) = |S_i| = q^r$. It follows by elementary induction that $\Xi^n(t_i) = q^{rn} t_i z(t_i) \ldots z^n(t_i)$. Let $\rho' \colon \bZ [[t_1,\ldots t_m]] \to \bZ [[t]]$ be the continuous homomorphism which maps $t_i \mapsto t^r$. Then by Theorem~\ref{thm:Sean}, we have 
$$ Z_{A}(t) = \rho' \rho \prod_{n=0}^{\infty} Z_{\bar{A}}(\Xi^n(t_1),\ldots,\Xi^n(t_m)) = 
\prod_{n=0}^{\infty} Z_{\bar{A}}(q^{rn}t^{r(n+1)},\ldots,q^{rn}t^{r(n+1)})
$$
since $\rho'\rho(\Xi^n(t_i)) = q^{rn}t^{r(n+1)}$. Changing product indices gives the formula in the Corollary. 
\end{proof}

We now investigate examples of orders which have homogeneous slices. We first recall the definition of symbols.

\begin{definition} \label{def:symbol}
Suppose $\xi \in R$ is a primitive $e$-th root of unity. Given $a,b \in R-0$ we define the {\em $R$-symbol} $(a,b):=(a,b)^R_{\xi}$ to be the $R$-algebra
$$
\Delta = \frac{R\langle x,y \rangle}{(x^e-a, y^e-b, yx-\xi xy)}
$$
and refer to $x,y$ as the {\em defining generators}. 
\end{definition}
We also need the following
\begin{notation}  \label{notn:defDeltad}
For any ring $\Delta$, $z \in \Delta$ and $l\in\mathbb{N}$ we let 
\begin{equation*}  
\Delta_l = \Delta_l(z) := 
\begin{pmatrix}
\Delta & \Delta & \cdots & \Delta \\
 z\Delta & \Delta & & \vdots \\
\vdots & \ddots & \ddots & \vdots \\
z \Delta & \cdots & z \Delta & \Delta
\end{pmatrix}
\subseteq M_l(\Delta).    
\end{equation*}
\end{notation}

\begin{proposition}  \label{prop:symbolsHomogSlice}
Consider an $R$-symbol $\Delta:=(a,b)^R_{\xi}$ where $R$ is complete regular local of dimension two with maximal ideal $\frakm$. Let $x,y$ be the corresponding defining generators and assume that the residue characteristic of $R$ is relatively prime to $e$, where $\xi\in R$ is a primitive $e$-th root of unity. Suppose that one of the following two occurs:
\begin{enumerate}
    \item $a,b$ is a regular system of parameters;
    \item $a \in \frakm - \frakm^2$ and $b \in R^{\times}$.
\end{enumerate}
Then $A:=M_r(\Delta_l(x))$ has a homogeneous slice and $A/ \rad A \simeq \prod^m M_r(\bF_q)$ for some $m,q$. 
\end{proposition}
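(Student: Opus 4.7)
The plan is to invoke Proposition~\ref{prop:cofiniteLengthIdealCond}: it suffices to exhibit a regular normal $z \in \rad A$ such that $A/(z)$ is a product of maximal orders over DVRs. Since $A = M_r(\Delta_l(x))$, tensoring a candidate in $\Delta_l(x)$ with $I_r$ preserves normality, regularity, membership in the Jacobson radical, and the ``product of maximal orders'' form for the quotient. So the first move is to reduce to finding a homogeneous slice for $\Delta_l(x)$ itself.

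My candidate is the twisted cyclic shift
\[
\tau \;=\; \begin{pmatrix}
0 & 1 & & & \\
 & 0 & 1 & & \\
 & & \ddots & \ddots & \\
 & & & 0 & 1 \\
x & & & & 0
\end{pmatrix} \in \Delta_l(x),
\]
a direct analogue of the usual uniformizer of the standard hereditary order over a DVR. A direct calculation gives $\tau^l = xI$. Hence $\tau$ is regular, since $x$ satisfies $x^e = a$ with $a$ regular in $R$ (so $x$ is regular in $\Delta$, and $xI$ is regular in $M_l(\Delta) \supseteq \Delta_l(x)$). Moreover $\tau^{le} = aI \in \mathfrak{m}\cdot \Delta_l(x) \subseteq \rad \Delta_l(x)$, so once the normality below is established the two-sided ideal $(\tau) = \tau\Delta_l(x)$ satisfies $(\tau)^{le} \subseteq \rad$; the semisimple quotient $\Delta_l(x)/\rad$ contains no nonzero nil ideals, forcing $(\tau) \subseteq \rad$.

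The main step is the normality $\tau \Delta_l(x) = \Delta_l(x)\tau$ and the resulting description of $(\tau)$. Left multiplication by $\tau$ cyclically shifts rows upward (with row $1$ sent to $x$ times the bottom row), while right multiplication by $\tau$ cyclically shifts columns rightward (with column $l$ sent to $x$ times the first). Tracking the staircase of ideals $\Delta$ (weakly above the diagonal) and $x\Delta$ (strictly below) through these shifts, both $\tau E$ and $E\tau$ for $E \in \Delta_l(x)$ produce entries in $\Delta$ at exactly the strict-upper-triangular positions and in $x\Delta$ everywhere else. So $(\tau)$ agrees with $\Delta_l(x)$ off the diagonal and replaces $\Delta$ by $x\Delta$ on the diagonal; the strict-upper and strict-lower entries vanish in the quotient, yielding
\[
\Delta_l(x)/(\tau) \;\cong\; (\Delta/x\Delta)^l \;=:\; \Omega^l, \qquad \Omega = R/(a)[y]/(y^e - \bar b).
\]

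To finish I analyze $\Omega$. In case (1), $R/(a)$ is a DVR with uniformizer $\bar b$, so $y^e - \bar b$ is Eisenstein and $\Omega$ is a totally ramified DVR extension of degree $e$. In case (2), $R/(a)$ is again a DVR but $\bar b$ is a unit; since $e$ is coprime to $\mathrm{char}\,k$ and $\xi \in R$, the polynomial $y^e - \bar b$ is separable, and Hensel's lemma exhibits $\Omega$ as a product of étale DVR extensions of $R/(a)$ with common residue field $L := k(\bar{\bar b}^{1/e})$. Either way $\Omega^l$ is a product of DVRs, so $A/(z) = M_r(\Omega^l) \cong \prod^l M_r(\Omega)$ is a product of maximal orders over DVRs and Proposition~\ref{prop:cofiniteLengthIdealCond} delivers the homogeneous slice. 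Taking radicals factor-by-factor gives $A/\rad A \cong M_r\bigl((\Omega/\rad \Omega)^l\bigr)$, which is $\prod^l M_r(k)$ in case (1) and $\prod^{le/n} M_r(L)$ with $n = [L:k]$ in case (2)---both of the form $\prod^m M_r(\mathbb{F}_q)$. The main obstacle is the explicit normality computation and the consequent identification of $(\tau)$; once the shifted-staircase description is in hand, the rest is a routine Eisenstein/Hensel analysis.
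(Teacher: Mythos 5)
Your proof is correct and follows essentially the same route as the paper: you take the same twisted cyclic shift (the paper calls it $\tilde{x}$, you call it $\tau$), you identify $\Delta_l(x)/(\tau) \cong \prod^l (R/(a))[y]/(y^e - \bar b)$, and you invoke Proposition~\ref{prop:cofiniteLengthIdealCond} to conclude. The only difference is that you fill in several details that the paper states without proof — the explicit row/column-shift verification that $\tau$ is normal and the identification of $(\tau)$, the nilpotence argument placing $\tau$ in $\rad$, and the Eisenstein/Hensel case split showing $(R/(a))[y]/(y^e - \bar b)$ is a product of isomorphic DVRs (the paper summarizes this as ``a cyclic extension of the DVR $R/(a)$''); none of these change the underlying strategy.
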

\begin{proof}
If 
$$
\tilde{x} := 
\begin{pmatrix}
0 & 1 & \cdots & 0 \\
\vdots & 0 & \ddots&  \\
0 & \ddots & \ddots & 1 \\
x & 0 & \cdots & 0
\end{pmatrix}
$$
then $\tilde{x}\in\rad(\Delta_l(x))$ is a normal regular element of $\Delta_l(x)$ with 
$$\Delta_l(x)/(\tilde{x}) \simeq \prod^l \frac{(R/(a))[y]}{(y^e - b)}.$$  
By hypothesis, $\frac{(R/(a))[y]}{(y^e - b)}$ is a cyclic extension of the discrete valuation ring $R/(a)$ and is thus a product of isomorphic discrete valuation rings. Taking matrix rings and applying Proposition~\ref{prop:cofiniteLengthIdealCond}, we see that $A$ has a homogeneous slice and $A/\rad A$ has the desired form.
\end{proof}

Recall that ramification data for an order measures the failure for it to be Azumaya, just as the ramification theory for finite maps measures failure from being \'etale. The reader unfamiliar with this notion can look at \cite[Section~2]{CI24} for definitions and a summary of the basic theory. We content ourselves here with reviewing the ramification data for the orders arising in Proposition~\ref{prop:symbolsHomogSlice} since these are the ones that we will restrict our attention to. The order $A:= M_r(\Delta_l(x))$ is not Azumaya along the {\em ramification} curve $C$ defined by $a=0$ since $A/(a)$ has a non-trivial nilpotent ideal generated by the normal element $\tilde{x}$. The ramification data there is obtained by looking at the ``residue field extension''. More precisely, the ramification along $C$ is given by the {\em ramification cover}
$$ Z(A/(\tilde{x})) = \prod^l \frac{(R/(a))[y]}{(y^e -b)}.
$$
This gives a degree $el$ cover of $C$ so the {\em ramification index of $A$ along $C$} is $el$. In case~(2) of the Proposition, $C$ is the only ramification curve, whilst in case~(1), $b=0$ defines the only other ramification curve and the ramification cover there is given by $\frac{(R/(b))[x]}{(x^e -a)}$. The corresponding ramification index is $e$ and the ramification cover is totally ramified at the closed point. In particular, in case~(1), the order $M_r(\Delta_l(x))$ has the ramification data appearing in part~(3) of the Corollary below. 

\begin{corollary}  \label{cor:smoothRamLocal}
Let $\cA$ be a degree $d$ maximal order on a two-dimensional regular integral scheme $X$ of finite type over $\bZ[\frac1{d}]$ with ramification locus $Y \subset X$. Suppose $H^0(\cO_X)$ contains a primitive $d$-th root of unity and $p \in X$ is a closed point where one of the following holds
\begin{enumerate}
    \item $p \notin Y$
    \item $p$ is a regular point of $Y$ or, 
    \item $p$ is a normal crossing point of $Y$ and the ramification covers totally ramify above $p$. 
\end{enumerate}
Then the complete local ring $\hat{\cA}_p$ has a homogeneous slice. Furthermore, $\hat{\cA}_p/ \rad \hat{\cA}_p \simeq \prod^m M_r(\bF_q)$ for some $m,q$ and $r=d$ in case (1), and $r=d/e$ in cases (2) and (3) where $e$ is the ramification index of $\cA$ along the ramification curves through $p$. 
\end{corollary}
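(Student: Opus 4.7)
In each case the plan is to identify the completion $\hat{\cA}_p$ with an algebra of the form covered by Example~\ref{eg:AzumayaHomogSlice} or Proposition~\ref{prop:symbolsHomogSlice}, from which both the existence of a homogeneous slice and the asserted description of $\hat{\cA}_p/\rad\hat{\cA}_p$ will follow immediately.

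Case (1) is essentially immediate: if $p\notin Y$ then $\hat{\cA}_p$ is Azumaya of degree $d$ over the complete, hence Henselian, regular local ring $\hat{R}:=\hat{\cO}_{X,p}$ of dimension two, with finite residue field $\bF_q$. The reduction of $\hat{\cA}_p$ modulo the maximal ideal is a central simple $\bF_q$-algebra, necessarily isomorphic to $M_d(\bF_q)$ since $\mathrm{Br}(\bF_q)=0$ by Wedderburn. Henselian lifting of Azumaya algebras then yields $\hat{\cA}_p \simeq M_d(\hat{R})$, and Example~\ref{eg:AzumayaHomogSlice} supplies the homogeneous slice. Here $r=d$ and $m=1$.

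For cases (2) and (3) I would invoke the local classification of tame maximal orders on regular surfaces, as summarised e.g.\ in \cite[\S 2]{CI24}. The hypothesis that $X$ lies over $\bZ[\tfrac{1}{d}]$ ensures tameness, while the primitive $d$-th root of unity in $\hat{R}$ combined with Kummer theory allows the given ramification cover to be realised by a cyclic symbol algebra. Concretely, in both cases one obtains $\hat{\cA}_p \simeq M_{d/e}(\Delta)$ where $\Delta=(a,b)^{\hat{R}}_{\xi}$ is a degree $e$ symbol algebra and $\xi$ is the $(d/e)$-th power of the fixed primitive $d$-th root of unity. In case (2) one chooses $a$ to be a local equation of $Y$ at $p$ and $b\in\hat{R}^\times$ a lift of an element generating the residue extension of the ramification cover; in case (3) one instead takes $(a,b)$ to be a regular system of parameters for the two branches of $Y$ through $p$, the totally ramified hypothesis being exactly what forces $b\in\frakm$. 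Proposition~\ref{prop:symbolsHomogSlice} applied with $l=1$ --- part (2) in case (2), part (1) in case (3) --- then delivers both the homogeneous slice and the identification $\hat{\cA}_p/\rad\hat{\cA}_p \simeq \prod^m M_{d/e}(\bF_q)$.

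The main obstacle is the identification $\hat{\cA}_p \simeq M_{d/e}(\Delta)$ in cases (2) and (3). The tools are standard --- Kummer theory for the cyclic ramification cover, Skolem--Noether to align generators, and the fact that $\Delta$ is a local ring (over the local $\hat{R}$) so that every finitely generated projective $\Delta$-module is free, which upgrades any Morita equivalence $\hat{\cA}_p \sim \Delta$ to a genuine matrix algebra isomorphism --- but one must track the parameters $a,b$ and the root of unity $\xi$ carefully so that the ramification indices computed in the paragraph following Proposition~\ref{prop:symbolsHomogSlice} agree with the given data (in particular, to rule out an extra factor $l>1$).
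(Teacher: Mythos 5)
Your treatment of case (1) matches the paper's, and the overall strategy for cases (2) and (3) --- invoke the local structure theory from \cite{CI24} and then apply Proposition~\ref{prop:symbolsHomogSlice} --- is exactly what the paper does. However, there is a genuine gap in how you execute it: you assert that $\hat{\cA}_p \simeq M_{d/e}(\Delta)$ for a single symbol algebra $\Delta$, i.e.\ that the factor $l$ in the $\Delta_l(x)$ form of Proposition~\ref{prop:symbolsHomogSlice} can always be taken to be $1$. You flag this yourself as ``the main obstacle'' and say one must ``rule out an extra factor $l>1$,'' but in case (2) one cannot rule it out, and it is not true in general. When $p$ is a regular point of the ramification curve $Y$, the ramification cover $\tilde{Y}\to Y$ may well have several points lying above $p$ (it is only assumed totally ramified in case (3), not case (2)), and then the local classification from \cite{CI24} produces $\hat{\cA}_p \simeq M_r(\Delta_l(x))$ with $l>1$ equal to the number of such points; here $\Delta$ has degree $e_0 = e/l$, the ramification index along $Y$ is $e = e_0 l$, and $r = d/e$. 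For $l>1$ this ring is \emph{not} of the form $M_{d/e}(\Delta')$ for any symbol algebra $\Delta'$; indeed $\Delta_l(x)$ is not even local, so the Skolem--Noether/Morita argument you sketch to upgrade a Brauer equivalence to a genuine matrix-ring isomorphism does not apply.

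The fix is that no such reduction to $l=1$ is needed. Proposition~\ref{prop:symbolsHomogSlice} is stated for $M_r(\Delta_l(x))$ with arbitrary $l$, and both conclusions of the corollary --- the existence of a homogeneous slice and $\hat{\cA}_p/\rad\hat{\cA}_p \simeq \prod^m M_r(\bF_q)$ --- already hold in that generality (the factor $\prod^m$ absorbs the $l$ blocks together with the further splitting of $\kappa_p[y]/(y^{e_0}-\bar{b})$). The paper's proof therefore simply cites \cite[Propositions~3.3 and 3.5]{CI24} to put $\hat{\cA}_p$ into the exact form of Proposition~\ref{prop:symbolsHomogSlice}, checks that the primitive $d$-th root of unity survives to the residue field because the residue characteristic is prime to $d$, and concludes; the only place the totally-ramified hypothesis of case (3) intervenes is to make ``the ramification index along the ramification curves through $p$'' into a single well-defined number (forcing $l=1$ there, so that the two curves through $p$ have the same index), not to reduce case (2) to $l=1$. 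Your attribution of the totally-ramified hypothesis to forcing $b\in\frakm$ is also off: whether $b$ is a unit or in $\frakm$ is governed by whether one or two ramification curves pass through $p$, which is already part of the case distinction.
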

\begin{proof}
In case~(1), $\cA$ is Azumaya locally at $p$ so we are done by Example~\ref{eg:AzumayaHomogSlice} and the fact that the Brauer group of a finite field is trivial. Suppose we are in case~(2) or (3). 
Since $\cA$ is maximal, $\hat{\cA}_p$ is normal in the sense of \cite[Definition~2.1]{CI24} so \cite[Propositions~3.3 and 3.5]{CI24} shows that it has the form given in Proposition~\ref{prop:symbolsHomogSlice}. Note that $\hat{\mathcal{O}}_{X,p}$ contains a primitive $d$-th root of unity because $H^0(\mathcal{O}_X)$ does. Moreover, these $d$-th roots of unity remain primitive in the residue field $k(p)$ because the residue characteristic is relatively prime to $d$.
\end{proof}

\begin{remark}
It would be interesting to know how generally the formula for zeta functions in Corollary~\ref{cor:SeansLocalThm} holds. More importantly, what nice sufficient criteria, such as those given in Corollary~\ref{cor:smoothRamLocal}, are there, to ensure the formula holds. One natural candidate is that it should hold for all orders on surfaces which are in some sense smooth. The class of terminal orders is one such class that has been studied significantly (see \cite{CI05}). They are defined by ramification data and include the examples in Corollary~\ref{cor:smoothRamLocal}. Over an algebraically closed field, the \'etale local structure of terminal orders on surfaces has been completely classified and one always obtains rings of the form $M_r(\Delta_l(x))$ as in Proposition~\ref{prop:symbolsHomogSlice}. Unfortunately, this classification has not been completed in the arithmetic case, but all those studied so far (see \cite{CI24}) also always have a homogeneous slice. 
\end{remark}

\section{Global zeta functions}  \label{sec:global}

Let $X$ be a normal integral scheme of finite type over $\bZ$ and $\cA$ be an order on $X$. We use the local zeta function formula of Corollary~\ref{cor:SeansLocalThm} and the product expansion (\ref{eq:localGlobalProductExpansion}) to compute global zeta functions. We consider the Azumaya case first, which can be easily dealt with. 

\begin{proposition}  \label{prop:zetaAzumaya}
Let $\cA$ be a degree $d$ Azumaya algebra on a two-dimensional regular integral scheme $X$ of finite type over $\bZ$. Then 
$$\zeta_{\cA}(s)=\prod_{n=1}^\infty\prod_{j=1}^d \zeta^{\Serre}_X(nd(s-1)+j).$$
In particular, if $X$ is a smooth surface over $\bF_q$, then 
\begin{equation} \label{eq:zetaAzumaya}
    Z_{\cA}(t) = \prod^{\infty}_{n=1} \prod_{j=1}^d Z^{\Serre}_X(q^{-j}(qt)^{dn}).
\end{equation}
\end{proposition}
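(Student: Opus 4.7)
My plan is to apply the local--global product expansion~(\ref{eq:localGlobalProductExpansion}) and invoke Corollary~\ref{cor:SeansLocalThm} at every closed point, then reassemble the resulting Euler factors into Serre zeta functions.

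\textbf{Reduction to the local case.} The scheme $X$ is of finite type over $\bZ$, so every closed point $x \in X$ has finite residue field $\kappa(x)$, and $\hat{\cO}_{X,x}$ is a complete two-dimensional regular local ring with finite residue field. Because $\cA$ is Azumaya, $\hat{\cA}_x := \cA\otimes_{\cO_X}\hat{\cO}_{X,x}$ is an Azumaya $\hat{\cO}_{X,x}$-algebra of degree $d$, and Example~\ref{eg:AzumayaHomogSlice} then guarantees that $\hat{\cA}_x$ admits a homogeneous slice. Moreover $\hat{\cA}_x/\rad\hat{\cA}_x$ is an Azumaya $\kappa(x)$-algebra of degree $d$, hence a central simple $\kappa(x)$-algebra; since the Brauer group of the finite field $\kappa(x)$ is trivial (Wedderburn), this quotient is $M_d(\kappa(x))$. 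Thus the hypotheses of Corollary~\ref{cor:SeansLocalThm} hold with $m=1$, $r=d$, and the local parameter $q=|\kappa(x)|$.

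\textbf{Assembling the Euler factors.} Write $q_x:=|\kappa(x)|$. Corollary~\ref{cor:SeansLocalThm} gives
\begin{equation*}
\zeta_{\hat{\cA}_x}(s)=\prod_{n=1}^{\infty}\prod_{j=1}^{d}\bigl(1-q_x^{nd-j}q_x^{-nds}\bigr)^{-1}=\prod_{n=1}^{\infty}\prod_{j=1}^{d}\bigl(1-q_x^{-(nd(s-1)+j)}\bigr)^{-1},
\end{equation*}
the second equality being the elementary identity $nd-j-nds=-(nd(s-1)+j)$. Taking the product over all closed points and interchanging the order of the (formal) products yields
\begin{equation*}
\zeta_{\cA}(s)=\prod_{n=1}^{\infty}\prod_{j=1}^{d}\prod_{x\in X}\bigl(1-|\kappa(x)|^{-(nd(s-1)+j)}\bigr)^{-1}=\prod_{n=1}^{\infty}\prod_{j=1}^{d}\zeta^{\Serre}_X\bigl(nd(s-1)+j\bigr),
\end{equation*}
by the definition~(\ref{eq:defSerreZeta}) of Serre's zeta function. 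This proves the first formula.

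\textbf{The smooth surface specialisation.} When $X$ is a smooth surface over $\bF_q$, one has $\zeta^{\Serre}_X(s)=Z^{\Serre}_X(q^{-s})$, and a direct computation gives
\begin{equation*}
q^{-(nd(s-1)+j)}=q^{-j}\,q^{nd}\,q^{-nds}=q^{-j}(qt)^{nd}
\end{equation*}
under the substitution $t=q^{-s}$. Substituting this into the first formula gives the claimed expression~(\ref{eq:zetaAzumaya}) for $Z_{\cA}(t)$.

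The only mild subtlety lies in the reduction step: one must check that the Azumaya hypothesis forces both conditions of Corollary~\ref{cor:SeansLocalThm} (existence of a homogeneous slice \emph{and} the matrix form $\prod^{m} M_r(\bF_q)$ of the radical quotient) at every closed point, which is where the triviality of the Brauer group of a finite field is used. Beyond that, the proof is formal bookkeeping with the exponents.
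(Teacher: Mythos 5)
Your argument is correct and follows precisely the same route as the paper: localise at each closed point, invoke Example~\ref{eg:AzumayaHomogSlice} and the triviality of $\operatorname{Br}\kappa(x)$ to put $\hat{\cA}_x$ in the scope of Corollary~\ref{cor:SeansLocalThm}, then assemble the Euler factors via~(\ref{eq:localGlobalProductExpansion}). You have simply written out the exponent bookkeeping and the product interchange more explicitly than the paper does.
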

\begin{proof}
For any $x \in X$, $\hat{\cA}_x$ is Azumaya and we can apply Corollary~\ref{cor:SeansLocalThm} noting that $\hat{\cA}_x/\rad \hat{\cA}_x \simeq M_d(\kappa(x))$ since $\text{Br}\,\kappa(x) = 0$. We thus find 
$$\zeta_{\hat{\cA}_x}(s) = \prod^{\infty}_{n=1} \prod_{j=1}^d (1 - |\kappa(x)|^{nd(1-s)-j})^{-1}.$$
\end{proof}

As in Proposition \ref{prop:zetaAzumaya}, let $\mathcal{A}$ be an Azumaya algebra of degree $d$ on a two-dimensional regular integral scheme $X$ of finite type over $\mathbb{Z}$. It is natural to compare the above formula to the zeta function for the Brauer-Severi scheme $B$ of $\mathcal{A}$. We recall that the latter is the Hilbert scheme of $\cA$-module quotients of $\cA$ that are locally free of rank $d$ as a coherent sheaf on $X$. We compute $\zeta^{\Serre}_B$ as follows.  
For each closed point $x\in X$, we see that the fibre $B_x\simeq \mathbb{P}_{k(x)}^{d-1}$ over $k(x)$ because $B_x$ is a Brauer-Severi variety of dimension $d-1$ over the finite field $k(x)$. Therefore,
$$\zeta_{B}^{\Serre}(s)=\prod_{x\in X}\zeta_{\mathbb{P}_{k(x)}^{d-1}}^{\Serre}(s)=\zeta^{\Serre}_{\mathbb{P}^{d-1}_X}(s)=\prod_{j=1}^d\zeta_{X}^{\Serre}(s-d+j).$$
Hence, by Proposition \ref{prop:zetaAzumaya}, we have the formula
\begin{equation}    \label{eq:zetaBS}
\zeta_\mathcal{A}(s)=\prod_{n=1}^\infty \zeta_{B}^{\Serre}(nd(s-1)+d).
\end{equation}
We interpret this formula as follows. Points of $B$ correspond to ideals $\cI < \cA$ of colength one and hence, only a part of the data encoded in the zeta function for $\cA$. This is the $n=1$ factor in (\ref{eq:zetaBS}) which is $\zeta^{\Serre}_B(ds)$. The change of variables from $s$ to $ds$ can be viewed as follows. Given a $k(x)$-rational point of $B_x$, the corresponding ideal $\cI < \cA$ has quotient $\cA/\cI \simeq k(x)^d$. 

\begin{example}
    Let $R$ be the ring of integers in a global field $K$, let $X=\bA_{R}^1$, and let $X_0=\Spec R$. Then $\zeta_{X_0}^{\Serre}(s)=\zeta_X^{\Serre}(s+1)$ is just the Dedekind zeta function of $K$. Taking $\cA=\cO_X$ in Proposition \ref{prop:zetaAzumaya}, we recover Segal's formula \cite{Segal}
    $$\zeta_{\cA}(s)=\prod_{n=1}^\infty \zeta_X^{\Serre}(n(s-1)+1)=\prod_{n=1}^\infty \zeta_{X_0}^{\Serre}(n(s-1)).$$
\end{example}

%{\red Seems like we might need Chebotarev Denisty Theorem for function fields to use the next result effectively. Acknowledge David H for this suggestion.}

The local-global product expansion for zeta functions in Equation~(\ref{eq:localGlobalProductExpansion}) naturally lends itself to the following definition. 
For any locally closed $Y\subseteq X$, let 
\begin{equation} \label{eq:defZetaStrata}
\zeta_{\cA,Y}(s) = \prod_{y \in Y} \zeta_{\hat{\cA}_y}(s).
\end{equation}

\begin{proposition}   \label{prop:zetaSmoothRamGlobal}
Let $\cA$ be a degree $d$ maximal order on a two-dimensional regular integral scheme $X$ of finite type over $\bZ[\frac{1}{d}]$ such that $H^0(\mathcal{O}_X)$ contains a primitive $d$-th root of unity. Suppose that $Y \subset X$ is a smooth ramification curve, crossing all other ramification curves normally, say at the points $y_1, \ldots, y_h$. Let $\pi\colon \tilde{Y} \to Y$ denote the ramification cover of $\cA$ along $Y$ and $e$ be the ramification index. Suppose furthermore, that all ramification covers are totally ramified above $y_1,\ldots, y_h$. Then 
$$ \zeta_{\cA,Y}(s) = \prod_{n=1}^{\infty}\prod_{j=1}^{d/e} \zeta^{\Serre}_{\tilde{Y}}(\tfrac{d}{e} n(s-1) + j).
$$
\end{proposition}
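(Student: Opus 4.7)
The plan is to apply the local--global factorization $\zeta_{\cA,Y}(s)=\prod_{y\in Y}\zeta_{\hat{\cA}_y}(s)$ from equation (\ref{eq:defZetaStrata}), compute each local factor via Corollary~\ref{cor:SeansLocalThm}, and then reorganize the resulting triple product so that the innermost index collapses to a product over closed points of $\tilde{Y}$, producing a Serre zeta function of $\tilde{Y}$.

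First I would invoke Corollary~\ref{cor:smoothRamLocal}: by hypothesis every closed point $y\in Y$ falls into case~(2) (if $y\notin\{y_1,\ldots,y_h\}$) or case~(3) (if $y=y_i$). In either case $\hat{\cA}_y$ admits a homogeneous slice, and $\hat{\cA}_y/\rad\hat{\cA}_y\simeq\prod^{m_y}M_r(\bF_{q_y})$ with $r=d/e$. A degree count in Proposition~\ref{prop:symbolsHomogSlice} (namely $rle=d$ combined with $r=d/e$) forces $l=1$, so $\hat{\cA}_y\simeq M_r(\Delta)$ for some symbol $\Delta=(a,b)^{\hat{R}}_\xi$ with homogeneous slice $\tilde{x}$ and $\bar{\Delta}\simeq\hat{\cO}_{Y,y}[y]/(y^e-b)$.

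The second step is to match $(m_y,q_y)$ with the fiber of $\pi\colon\tilde{Y}\to Y$ over $y$, using the fact that this fiber is precisely $\Spec\bar{\Delta}$. When $y=y_i$, $b$ is a uniformizer of $\hat{\cO}_{Y,y_i}$, so $y^e-b$ is Eisenstein and $\bar{\Delta}$ is a totally ramified DVR with unique preimage $\tilde{y}_i$ satisfying $\kappa(\tilde{y}_i)=\kappa(y_i)$; thus $m_y=1$ and $q_y=|\kappa(\tilde{y}_i)|$. When $y\notin\{y_1,\ldots,y_h\}$, $b$ is a unit, and since $\kappa(y)$ contains primitive $e$-th roots of unity, Hensel's lemma factors $y^e-b$ over $\hat{\cO}_{Y,y}$ into $e/f$ monic irreducibles of degree $f$, where $f$ is the order of $\bar{b}$ in $\kappa(y)^\times/\kappa(y)^{\times e}$. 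Hence $\bar{\Delta}$ is a product of $m_y:=e/f$ unramified extensions of $\hat{\cO}_{Y,y}$, each corresponding to a preimage $\tilde{y}\mapsto y$ with residue cardinality $q_y=|\kappa(y)|^f=|\kappa(\tilde{y})|$.

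Plugging these identifications into Corollary~\ref{cor:SeansLocalThm} yields
$$\zeta_{\hat{\cA}_y}(s)=\prod_{n=1}^\infty\prod_{j=1}^{d/e}\prod_{\tilde{y}\mapsto y}\bigl(1-|\kappa(\tilde{y})|^{-(\tfrac{d}{e}n(s-1)+j)}\bigr)^{-1},$$
so taking the product over $y\in Y$ collapses the innermost product into one over all closed points of $\tilde{Y}$, giving $\prod_{n=1}^\infty\prod_{j=1}^{d/e}\zeta^{\Serre}_{\tilde{Y}}(\tfrac{d}{e}n(s-1)+j)$. The main obstacle is the matching carried out in the third paragraph: one must verify that the symbol presentation of $\hat{\cA}_y$ from Proposition~\ref{prop:symbolsHomogSlice} genuinely realizes $\Spec\bar{\Delta}$ as the local model of the globally defined cover $\tilde{Y}\to Y$, so that the Kummer factorization of $y^e-b$ over $\hat{\cO}_{Y,y}$ corresponds bijectively to the decomposition of $\tilde{Y}\times_Y\Spec\hat{\cO}_{Y,y}$ into complete local rings at the preimages of $y$.
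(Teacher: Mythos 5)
Your overall strategy mirrors the paper's: factor $\zeta_{\cA,Y}$ as a product of local zeta functions over closed points $y\in Y$, identify $\hat\cA_y/\rad\hat\cA_y$ via Corollary~\ref{cor:smoothRamLocal}, plug into Corollary~\ref{cor:SeansLocalThm}, and match the residue fields to those of the fiber $\pi^{-1}(y)$. The paper's own proof is extremely terse at the matching step, asserting only that $A/\rad A\simeq\prod_{\tilde y\in\pi^{-1}(y)}M_r(\kappa(\tilde y))$ ``since ramification commutes with complete localisation'' and then checking the totally ramified points separately; you unpack this via the Kummer factorisation of $w^e-b$ over $\hat\cO_{Y,y}$ (where $w$ is the defining generator previously called $y$), which is exactly the content behind the paper's one-line assertion, and your treatment of the Eisenstein case at $y_i$ is correct.

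However, the degree count you use to force $l=1$ is flawed, because you conflate two different quantities both named $e$. In Proposition~\ref{prop:symbolsHomogSlice} the order $M_r(\Delta_l(x))$ has $\deg = r\cdot l\cdot e_\xi$ where $e_\xi$ is the order of the root of unity (the degree of the symbol $\Delta$); meanwhile Corollary~\ref{cor:smoothRamLocal} gives $r=d/e$ where $e$ is the \emph{ramification index}, and the paper's discussion after Proposition~\ref{prop:symbolsHomogSlice} computes this ramification index to be $e=l\,e_\xi$. Substituting, $d=r\,l\,e_\xi=r\,e$, which is an identity holding for every $l$; it forces nothing. The correct reason $l=1$ holds here is maximality, not the degree count: for $l>1$ one has $\Delta_l(x)\subsetneq M_l(\Delta)$, hence $M_r(\Delta_l(x))\subsetneq M_{rl}(\Delta)$, a strictly larger order in the same central simple algebra, contradicting maximality. (Alternatively, you do not need $l=1$ at all: even with $l>1$, the ramification cover is $\prod^l(R/(a))[w]/(w^{e_\xi}-b)$, its Kummer factorisation still matches the decomposition of $A/\rad A$ into $lm_0$ factors, and the product over $y$ still collects into $\zeta^{\Serre}_{\tilde Y}$; this is the route the paper implicitly takes, and it has the advantage of not depending on local structure theory of maximal---as opposed to normal---orders.) With the degree count replaced by either of these corrections your argument is sound.
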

\begin{proof}
Let $y \in Y$ and note that by Corollary~\ref{cor:smoothRamLocal}, $A := \hat{\cA}_y$ has a homogeneous slice. Since ramification commutes with complete localisation, we see from Corollary~\ref{cor:smoothRamLocal} that for $y \notin \{y_1,\ldots,y_h\}$,
\begin{equation}  \label{eq:zetaSmoothRamGlobalTop}
 A / \rad A \simeq \prod_{\tilde{y} \in \pi^{-1}(y)} M_r(\kappa(\tilde{y}))
\end{equation}
where $r = d/e$. Actually, (\ref{eq:zetaSmoothRamGlobalTop}) also holds if $y=y_i$ since in that case $\pi^{-1}(y_i)$ consists set-theoretically of a single $\kappa(y_i)$-rational point. 
From Corollary~\ref{cor:SeansLocalThm}, we see that\begin{equation}
\zeta_A(s) = \prod_{\tilde{y} \in \pi^{-1}(y)} \prod_{n=1}^{\infty} \prod_{j=1}^{r} (1 - |\kappa(\tilde{y})|^{-nr(s-1)-j})^{-1}.
\end{equation}
Taking the product over all $y \in Y$ gives the desired formula. 
\end{proof}

Suppose now that $X$ is a quasi-projective variety over a field. We can think of left ideals in $\cA$ as points of the Hilbert scheme of quotients of $\cA$. For the reader unfamiliar with the theory of Hilbert schemes in the noncommutative context, the theory has been worked out rather generally in \cite{AZ01}. In particular, it is known that there exists a scheme $\Hilb^n(\cA)$ parameterising colength $n$ left ideals of $\cA$. Its existence can also be deduced from the corresponding Hilbert scheme of $\cO_X$-module quotients $\cA/\cF$ of the $\cO_X$-module $\cA$ and noting that the condition that $\cF$ is closed under left multiplication by $\cA$ is closed. We let $\Hilb^n_{red}(\cA)$ be the reduced induced subscheme of $\Hilb^n(\cA)$. 

In the commutative case $\cA = \cO_X$, the finite colength ideals correspond to the Hilbert scheme of points studied by G\"ottsche \cite{Gottsche} via the Weil conjectures. We follow his treatment closely in pursuing an analogous study for orders on surfaces. 

We will use the Weil conjectures in the following way. Let $X$ now be a variety over the finite field $\bF_q$. Then by Dwork's theorem on the rationality of the zeta function, there are algebraic integers $\alpha_i\in \bC^{\times}$ and integers $l_i$ such that 
\begin{equation}  \label{eq:WeilConj}
|X(\bF_{q^n})| = \sum_{i} l_i \alpha_i^n, \quad \text{for } \ n \in \bZ_{>0}.
\end{equation}
%This is easily seen from \cite[Appendix~C, 1.3]{Hart}. 
It will be convenient to let $\cW$ denote the monoid of geometric progressions of the form $(\alpha^n)_{n \in \bZ_{>0}}$ for some $\alpha \in \bC^{\times}$. This is of course naturally isomorphic to $\bC^{\times}$, but it allows us to view the sequence $(|X(\bF_{q^n})|)_{n \in \bZ_{>0}}$ as an element of the group algebra $\bQ \cW$ since geometric progressions are linearly independent. Consider the group homomorphism $\delta \colon \cW \to \bR\colon \alpha \mapsto 2\log_q|\alpha|$. We write the group algebra of $\bR$ as $\bQ[z^i]_{i \in \bR}$. We can lift $\delta$ to a group algebra homomorphism
\begin{equation}  \label{eq:defNu}
\nu \colon \bQ \cW \to \bQ[z^i]_{i \in \bR} \colon (\alpha^n)_{n \in \bZ_{>0}} \mapsto z^{2 \log_q|\alpha|}.
\end{equation}
If $X$ is smooth and projective, then the Riemann hypothesis part of the Weil conjectures means that $\nu(|X(\bF_{q^n})|)_{n \in \bZ_{>0}} \in \bQ[z]$. 

\begin{definition} \label{def:WeilBetti}
Suppose that $X$ is a quasi-projective variety over $\bF_q$ with the property that $\nu(|X(\bF_{q^n})|)_{n \in \bZ_{>0}} \in \bQ[z]$. Then we say that $X$ {\em has a Weil-Poincar\'e polynomial} $P(X,z)$ and it is defined by 
$$ P(X, -z) := \nu (|X(\bF_{q^n})|)_{n \in \bZ_{>0}}.
$$
We then define the {\em $n$-th Weil-Betti number} $b^W_n(X)$ of $X$ to be the coefficient of $z^n$ in $P(X,z)$. 
\end{definition}
As remarked above, if $X$ is smooth and projective, then it has a Weil-Poincar\'e polynomial. If furthermore, $X$ is obtained by reduction modulo a prime of a ring of integers, then the Weil-Poincar\'e polynomial is the usual Poincar\'e polynomial of the corresponding complex variety $X(\bC)$, that is, its coefficients give the topological Betti numbers of $X(\bC)$. Unfortunately, we wish to apply this theory to the reduced Hilbert schemes of $\cA$, which unlike the case of smooth surfaces, may not be smooth in general. 

As in \cite{Gottsche}, we will be interested in the following ``Poincar\'e'' zeta function. 
\begin{definition}  \label{def:PoincareZeta}
Let $\cA$ be an order on a quasi-projective variety $X$ over $\bF_q$. Suppose that for all $n \in \bN$, the reduced Hilbert schemes $\Hilb^n_{red}(\cA)$ have Weil-Poincar\'e polynomials. Then we define
$$Z^{\text{Poin}}_{\cA}(z,t) := \sum_n P(\Hilb^n_{red}(\cA),z)t^n.
$$
\end{definition}

We will need some slight modifications of results from \cite{Gottsche}. Using the algebra homomorphism $\nu$ in (\ref{eq:defNu}), we easily obtain the following generalisation of \cite[Remark~1.2.2]{Gottsche}
\begin{corollary}  \label{cor:PointCountToPoincare}
Suppose $X_1,\ldots,X_l$ are $\bF_q$-varieties which have Weil-Poincar\'e polynomials. If $Y$ is another $\bF_q$-variety whose point counts are given by a polynomial $F$ over $\bQ$ in the variables, $u, v_{ij}$ for $1\leq i \leq l, \ \ 1 \leq j \leq m$ via 
\begin{equation}  \label{eq:PointCountToPoincare}
|Y(\bF_{q^n})| = F(q^n, |X_1(\bF_{q^n})|, \ldots, |X_1(\bF_{q^{nm}})|,|X_2(\bF_{q^n})|, \ldots ,|X_l(\bF_{q^{nm}})|)
\end{equation}
for all positive integers $n$, 
then $Y$ has a Weil-Poincar\'e polynomial determined by 
$$P(Y,-z) = F(z^2,P(X_1,-z), \ldots, P(X_1,-z^m),P(X_2, -z), \ldots, P(X_l, -z^m)).
$$
\end{corollary}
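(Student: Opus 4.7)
The plan is to apply the algebra homomorphism $\nu$ of (\ref{eq:defNu}) to both sides of the hypothesis (\ref{eq:PointCountToPoincare}) and use the fact that $F$ is a polynomial with rational coefficients to pull $\nu$ inside $F$.

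First, I would compute the image under $\nu$ of each argument appearing in $F$. The sequence $(q^n)_n$ is a single geometric progression with ratio $q$, so $\nu$ sends it to $z^{2\log_q q}=z^2$. By Definition~\ref{def:WeilBetti}, $\nu$ sends $(|X_i(\bF_{q^n})|)_n$ to $P(X_i,-z)$. For the remaining arguments, the rationality expansion (\ref{eq:WeilConj}) writes $|X_i(\bF_{q^n})|=\sum_k l_{i,k}\alpha_{i,k}^n$, whence
$$|X_i(\bF_{q^{nj}})|=\sum_k l_{i,k}(\alpha_{i,k}^j)^n$$
is still a $\bQ$-linear combination of geometric progressions in $n$, so it lies in $\bQ\cW$, and
$$\nu(|X_i(\bF_{q^{nj}})|)_n=\sum_k l_{i,k}z^{2j\log_q|\alpha_{i,k}|}=P(X_i,-z^j)$$
by direct comparison with $P(X_i,-z)=\sum_k l_{i,k}z^{2\log_q|\alpha_{i,k}|}$.

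Next, since $\nu\colon\bQ\cW\to\bQ[z^i]_{i\in\bR}$ is a ring homomorphism and $F$ is a polynomial over $\bQ$, applying $\nu$ termwise in $n$ to (\ref{eq:PointCountToPoincare}) yields
$$\nu(|Y(\bF_{q^n})|)_n=F\bigl(z^2,P(X_1,-z),\ldots,P(X_1,-z^m),P(X_2,-z),\ldots,P(X_l,-z^m)\bigr).$$
The right-hand side lies in $\bQ[z]$ because each $P(X_i,-z^j)$ does, so $Y$ has a Weil-Poincar\'e polynomial and it is given by this expression, as claimed.

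The proof is essentially bookkeeping once the homomorphism property of $\nu$ is in place, so there is no serious obstacle; the only mildly delicate point is confirming that $\nu$ respects the multiplication on $\bQ\cW$ coming from the group structure on $\cW$ (equivalently, componentwise multiplication of sequences), so that substitution of elements of $\bQ\cW$ into $F$ commutes with $\nu$. This reduces to $\nu(\alpha\beta)=\nu(\alpha)\nu(\beta)$, which follows immediately from $\log_q|\alpha\beta|=\log_q|\alpha|+\log_q|\beta|$, and with this in hand the substitution conventions in Definition~\ref{def:WeilBetti} and in the statement of the corollary line up.
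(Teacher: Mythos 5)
Your proposal is correct and takes essentially the same route as the paper: apply the ring homomorphism $\nu$ to both sides of (\ref{eq:PointCountToPoincare}) and use that $F$ has rational coefficients to pull $\nu$ through. The paper phrases the key identification $\nu\bigl(|X_i(\bF_{q^{nj}})|\bigr)_n = P(X_i,-z^j)$ via the group algebra homomorphism on $\bQ\cW$ induced by $\alpha\mapsto\alpha^j$, which is a compact restatement of your direct computation from Dwork's expansion.
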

\begin{proof}
This follows on applying $\nu$ to both sides of (\ref{eq:PointCountToPoincare}) and noting that $|X_i(\bF_{q^{nj}})|$ is obtained from $|X_i(\bF_{q^{n}})|$ by applying the group algebra homomorphism $\bQ \cW \to \bQ \cW$ corresponding to the group homomorphism $\cW \to \cW \colon \alpha \mapsto \alpha^j$. 
\end{proof}

Suppose now that $\cA$ is a degree $d$ maximal order on a smooth quasi-projective $\bF_q$-variety $X$, where $H^0(\mathcal{O}_X)$ contains primitive $d$-th roots of unity and $q$ is relatively prime to $d$. Suppose further that all the ramification data satisfy the following
\begin{assumption}  \label{ass:terminalRam}
We will assume that 
\begin{enumerate}
    \item all the ramification indices are equal to $e$ for some $e$,
    \item the ramification curves are all smooth and cross normally and
    \item at all these normal crossings in the ramification locus, the ramification covers are totally ramified.
\end{enumerate}
\end{assumption}
These assumptions ensure that we are now in the situation of Proposition~\ref{prop:zetaSmoothRamGlobal}. We wish to calculate $Z_{\cA}^{\text{Poin}}(z,t)$. 

Let us first consider the special case where there is a single smooth ramification curve $Y \subset X$ and the ramification cover is given by an $e:1$ cover $\tilde{Y} \to Y$, necessarily \'etale. To simplify notation we let $r = d/e$.
In this case 
\begin{align*} 
Z^{\Hilb}_{\cA}(q,t) & := \sum_n | \Hilb_{red}^n(\cA)(\bF_{q})| t^n \\
& = \prod_{n=1}^{\infty} \left(
\prod_{j=1}^d Z^{\Serre}_X(q^{nd-j}t^{nd}) \cdot\prod_{j=1}^dZ^{\Serre}_Y(q^{nd-j}t^{nd})^{-1}\cdot\prod_{j=1}^r Z^{\Serre}_{\tilde{Y}}(q^{nr-j}t^{nr})
\right)
\end{align*}
by Propositions~\ref{prop:zetaAzumaya} and \ref{prop:zetaSmoothRamGlobal}. 

In general, where the ramification locus $Y$ is not necessarily a single curve (but still satisfies Assumption~\ref{ass:terminalRam}), then setting $U := X-Y$ and stratifying the ramification locus, we see there exist smooth quasi-projective curves $\tilde{Y}_1, \ldots, \tilde{Y}_h$ such that 

\begin{equation}  \label{eq:stratifyHilbP}
Z^{\Hilb}_{\cA}(q,t) = \prod_{n=1}^{\infty} \left(
\prod_{j=1}^d Z^{\Serre}_U(q^{nd-j}t^{nd}) \cdot\prod_{i=1}^h \prod_{j=1}^r Z^{\Serre}_{\tilde{Y}_i}(q^{nr-j}t^{nr})
\right). 
\end{equation}

Recall that 
$
Z^{\Serre}_X(t) = \exp\left( \sum_{k=1}^{\infty} \frac{|X(\bF_{q^k})|}{k}t^k\right)
$
so
\begin{multline}  \label{eq:logHilbP}
\log Z^{\Hilb}_{\cA}(q,t) = \sum_{n=1}^{\infty} \left(
\sum_{j=1}^d \sum_{k=1}^{\infty} \frac{|U(\bF_{q^k})|}{k}q^{knd-kj}t^{knd} + \sum_{i=1}^h\sum_{j=1}^r \sum_{k=1}^{\infty} \frac{|\tilde{Y}_i(\bF_{q^k})|}{k}q^{knr-kj}t^{knr}
\right) \\
= \sum_{k=1}^{\infty} \left(
\frac{|U(\bF_{q^k})|}{k}\frac{t^{kd}(q^{dk}-1)}{(1-(qt)^{kd})(q^k-1)} + 
\sum_{i=1}^h\frac{|\tilde{Y}_i(\bF_{q^k})|}{k}\frac{t^{kr}(q^{rk}-1)}{(1-(qt)^{kr})(q^k-1)}
\right).
\end{multline}

Formula~\ref{eq:logHilbP} also holds for $q$ replaced with any positive integer power, so we may exponentiate the right hand side and extract the coefficient of $t^m$ to see that $|\Hilb^m_{red}(\cA)(\bF_{q^n})|$ is given by a polynomial function over $\bQ$ in $q^n$ as well as the $|U(\bF_{q^{nk}})|,|\tilde{Y}_i(\bF_{q^{nk}})|$ for a finite number of $k$ (note that the fractions $\frac{q^{dk}-1}{q^k-1},\frac{q^{dr}-1}{q^k-1}$ are just polynomials in $q$). Since the $\tilde{Y}_i$ are smooth projective curves over $\mathbb{F}_q$, they have Weil-Poincar\'e polynomials by the Weil conjectures. In fact, the same is true for \textit{any} curve over $\mathbb{F}_q$. To see this, we first note that, for any closed subvariety $V'$ of any variety $V$ over $\mathbb{F}_q$, we have $|V(\mathbb{F}_{q^n})|=|V'(\mathbb{F}_{q^n})|+|V''(\mathbb{F}_{q^n})|$ where $V''=V-V'$. Then we apply the resolution of singularities for curves and the Weil conjectures to reduce to the obvious case of points. Now, to see that $U$ also has a Weil-Poincar\'e polynomial, we combine the resolution of singularities for surfaces with the Weil conjectures to reduce to the curve case (c.f. \cite[Remark 2.20]{mustata}). We may thus apply Corollary~
\ref{cor:PointCountToPoincare} to obtain

\begin{theorem}   \label{thm:PoincareZetaOrder}
Suppose $\cA$ is a degree $d$ maximal order on a smooth quasi-projective 
$\bF_q$-variety $X$, where $H^0(\cO_X)$ contains a primitive $d$-th root of unity and $q$ is relatively prime to $d$. Suppose that the ramification data of $\cA$ satisfies Assumption~\ref{ass:terminalRam} so that there are smooth curves $\tilde{Y}_1,\ldots, \tilde{Y}_h$ as above giving ramification covers of degree $e$. Let $r = d/e$ and $U$ be the Azumaya locus. Then the reduced Hilbert schemes $\Hilb^n_{red}(\cA)$ have Weil-Poincar\'e polynomials and 
$$ Z^{\text{Poin}}_{\cA}(z,t) = Z^{\text{Poin}}_{\cA,U}(z,t)Z^{\text{Poin}}_{\cA,\tilde{Y}_1}(z,t)\ldots Z^{\text{Poin}}_{\cA,\tilde{Y}_h}(z,t)
$$
where
\begin{align*}
Z^{\text{Poin}}_{\cA,U}(-z,t) & = \exp\left( \sum_{k=1}^{\infty} \frac{P(U,-z^k)}{k}\frac{t^{kd}(z^{2dk}-1)}{(1-(z^2t)^{kd})(z^{2k}-1)}\right) 
%\\
%Z^{\text{Poin}}_{\cA,\tilde{Y}_i}(-z,t) & = \exp\left( \sum_{k=1}^{\infty} \frac{P(\tilde{Y}_i,-z^k)}{k}\frac{t^{kd}(z^{2dk}-1)}{(1-(z^2t)^{kd})(z^{2k}-1)}\right) 
\\
Z^{\text{Poin}}_{\cA,\tilde{Y}_i}(-z,t) & = \exp\left( \sum_{k=1}^{\infty} \frac{P(\tilde{Y}_i,-z^k)}{k}\frac{t^{kr}(z^{2rk}-1)}{(1-(z^2t)^{kr})(z^{2k}-1)}\right). 
\end{align*}
\end{theorem}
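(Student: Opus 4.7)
The plan is to derive the theorem directly from equation~(\ref{eq:logHilbP}) by applying Corollary~\ref{cor:PointCountToPoincare} term-by-term, interpreting both sides as formal power series in $t$ and then in $z$. The reduction to this corollary is the main work; the product structure falls out automatically from the additivity of the logarithm.

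First I would verify the Weil-Poincar\'e hypothesis for $U$ and for each $\tilde{Y}_i$. For the $\tilde{Y}_i$, the argument sketched in the paragraph above the theorem reduces to resolving singularities of a curve and applying the Weil conjectures to the smooth projective components and to points. For the Azumaya locus $U$, I would write $U = X - Y$ where $Y$ is a union of smooth curves crossing normally; applying the additivity of point-counting over a stratification and resolution of singularities of $X$ (which is already smooth), one reduces to the smooth projective case for surfaces and curves, which is handled by the Weil conjectures. Once this is in place, I would exponentiate equation~(\ref{eq:logHilbP}) and extract the coefficient of $t^m$ to express $|\Hilb^m_{red}(\cA)(\bF_{q^n})|$ as a polynomial over $\bQ$ in $q^n$ and in the values $|U(\bF_{q^{nk}})|, |\tilde{Y}_i(\bF_{q^{nk}})|$ for $1 \le k \le m$; here the denominators $(1-(qt)^{kd})(q^k-1)$ are cleared because $\frac{q^{dk}-1}{q^k-1}$ is a polynomial in $q^k$ and the $t$-expansion introduces only finitely many $k$ into each coefficient of $t^m$. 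This is precisely the hypothesis of Corollary~\ref{cor:PointCountToPoincare}.

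Applying Corollary~\ref{cor:PointCountToPoincare} then yields that each $\Hilb^m_{red}(\cA)$ has a Weil-Poincar\'e polynomial obtained by the substitution $q \mapsto z^2$ and $|V(\bF_{q^k})| \mapsto P(V,-z^k)$ in the expression for $|\Hilb^m_{red}(\cA)(\bF_{q^n})|$. Equivalently, applying this substitution to the entire generating function, we get that $\log Z^{\text{Poin}}_{\cA}(-z,t)$ equals the right-hand side of (\ref{eq:logHilbP}) with $q$ replaced by $z^2$ and $|U(\bF_{q^k})|, |\tilde{Y}_i(\bF_{q^k})|$ replaced by $P(U,-z^k), P(\tilde{Y}_i,-z^k)$. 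Because the resulting sum splits as an $U$-summand plus a sum of $\tilde{Y}_i$-summands, exponentiating produces the claimed product decomposition with the three displayed factors.

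The main obstacle is the rigorous passage from the point-count identity to the Weil-Poincar\'e identity: one must check that the substitution $\nu$ can be applied coefficient-wise to an exponential of an infinite sum. This is handled by noting that, for each $m$, only finitely many of the logarithmic terms contribute to the coefficient of $t^m$, so the coefficient is a polynomial expression of the type required by Corollary~\ref{cor:PointCountToPoincare}. A minor but important auxiliary check is that reduced Hilbert schemes (rather than the scheme-theoretic version) are the right object: this is harmless since $|\Hilb^n_{red}(\cA)(\bF_{q^n})| = |\Hilb^n(\cA)(\bF_{q^n})|$, so the point counts on which equation~(\ref{eq:stratifyHilbP}) depends are unchanged by passing to the reduction.
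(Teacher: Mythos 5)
Your proposal follows the paper's own proof essentially step for step: establish equation~(\ref{eq:logHilbP}), verify Weil--Poincar\'e polynomials for $U$ and the $\tilde Y_i$ via stratification/resolution and the Weil conjectures, extract the coefficient of $t^m$ after exponentiation to land in the setting of Corollary~\ref{cor:PointCountToPoincare}, and then read off the product structure from the additivity of the logarithm. The approach and the key finiteness observation (only finitely many $k$ contribute to a given $t^m$-coefficient) are the same as in the paper, so this is correct.
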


As usual, the formulae above can be written in terms of the Weil-Betti numbers

\begin{addendum}  \label{add:PoincareZetaOrder}
\begin{align*}
Z^{\text{Poin}}_{\cA,U}(z,t) & = 
\prod_{i=0}^4 \prod_{j=0}^{\infty} \prod_{l=0}^{d-1} (1-(-z)^{2jd+2l+i}t^{jd+d})^{-(-1)^i b^W_i(U)}%(1+z^{2jd+2l+3}t^{jd+1})^{b^W_3(X)}}{(1-z^{2jd+2l}t^{jd+1})^{b^W_0(X)}(1-z^{2jd+2l+2}t^{jd+d})^{b^W_2(X)}(1-z^{2jd+2l+4}t^{jd+1})^{b^W_4(X)}}
\\
%Z^{\text{Poin}}_{\cA,Y}(z,t) & =  \prod_{j=0}^{\infty} \prod_{l=0}^{d-1} \frac{(1+z^{2jd+2l+1}t^{jd+1})^{b^W_1(Y)}}{(1-z^{2jd+2l}t^{jd+1})^{b^W_0(Y)}(1-z^{2jd+2l+2}t^{jd+1})^{b^W_2(Y)}}\\
Z^{\text{Poin}}_{\cA,\tilde{Y}_i}(z,t) & = \prod_{j=0}^{\infty} \prod_{l=0}^{r-1} \frac{(1+z^{2jr+2l+1}t^{jr+r})^{b^W_1(\tilde{Y}_i)}}{(1-z^{2jr+2l}t^{jr+r})^{b^W_0(\tilde{Y}_i)}(1-z^{2jr+2l+2}t^{jr+r})^{b^W_2(\tilde{Y}_i)}}.
\end{align*}
\end{addendum}

\section{Zeta functions for noncommutative projective planes}  \label{sec:eg}

In this section, we compute various zeta functions for noncommutative analogues of the projective plane. These should certainly include Azumaya algebras on the projective plane, but following the work of Artin-Tate-van den Bergh \cite{ATV1990}, one should also include maximal orders on $\mathbb{P}^2$ ramified on cubic curves. These include all the orders Morita equivalent to the three-dimensional Sklyanin algebra. They also appear naturally in the minimal model program for orders on surfaces \cite{CI05}. We will review their ramification below.

%We fix a finite field $\bF_q$. The existence of maximal orders on a projective surface with given ramification is controlled by a complex due to Artin-Mumford \cite{AM}. The interested reader can see \cite{CI05}[Section~3.1] for an elementary account of this which suffices for generating the examples below. 

Before looking at ``Sklyanin orders'' and their cousins, we consider the situation of Azumaya algebras.
\begin{example}[Azumaya algebras]
Let $\cA$ be an Azumaya algebra of degree $d$ over a smooth projective surface $X$ over $\bF_q$. The Poincar\'e zeta function is given in Addendum~\ref{add:PoincareZetaOrder}. If $e(X)$ is the Euler characteristic of $X$, then we have
$$
Z^{\text{Poin}}_{\cA}(-1,t) = 
\prod_{j=0}^{\infty} (1 - t^{dj+d})^{-e(X)d}.
$$
When $d=1$ and $t=e^{2\pi i\tau}$, we obtain G\"ottsche's formula
\begin{align}\label{got_eul}
   Z^{\text{Poin}}_{\cA}(-1,t) = 
\prod_{j=0}^{\infty} (1 - t^{j+1})^{-e(X)}=\left(\frac{t^{1/24}}{\eta(\tau)}\right)^{e(X)} 
\end{align}
involving the Dedekind eta function $\eta(\tau)$, a modular form of weight $1/2$ \cite[Remark~2.3.12]{Gottsche}. Modularity of ``Euler" zeta functions for singular surfaces is an active research topic, with some bearing on general conjectures from physics about invariants of moduli spaces \cite{Gottsche_2002}. Bryan and Gyenge \cite{BG} have some nice results in this direction.
\end{example}

We now turn our attention to the non-Azumaya case. As motivation, note that a very natural candidate for a noncommutative affine plane over a field $\bF$ results from the skew polynomial ring $\bF_{\xi}[x,y]$ where $\xi \in \bF^\times$ gives the skew-commutative relation $yx = \xi xy$. When $\xi$ is a primitive $e$-th root of unity, then this is an order over its centre $R = \bF[u:=x^e,v:=y^e]$ which is Azumaya when $uv \neq 0$. The ramification above the coordinate lines $u=0, v=0$ are given respectively by the covers $\bF[x]$ and $\bF[y]$ which are totally ramified at the point where the ramification locus has a normal crossing point. Hence Assumption~\ref{ass:terminalRam} is satisfied. One can compactify this example to a maximal order on $\bP^2$ which also ramifies at the third coordinate line, and the ramification cover is similarly the degree $e$ cover that is totally ramified just at the coordinate points. In particular, the ramification locus here is a singular cubic consisting of 3 non-concurrent lines. More generally, the noncommutative projective planes given by three-dimensional regular algebras which are finite over their centre correspond to orders on $\bP^2$ which are ramified on cubic curves. 

\begin{example}  \label{eg:singRam}
Let $\bar{Y} \subset \bP^2_{\bar{\bF}_q}$ be a singular cubic curve which is either i) 3 non-concurrent lines, ii) a conic and a line meeting transversally or iii) a nodal cubic. Note that all the components of $\bar{Y}$ are rational curves. Then given integers $e>1, r \in \bZ_{>0}$ coprime to $q$, the Artin-Mumford sequence \cite{AM}, guarantees the existence of a maximal order $\bar{\cA}$ of degree $d=re$ that is ramified on $\bar{Y}$ and the ramification covers are the degree $e$ covers totally ramified at the singular points of $\bar{Y}$. By picking $q$ sufficiently large, we may assume that the order is defined over $\bF_q$. Suppose $\cA$ is such an order on $\bP^2_{\bF_q}$ which is ramified on a singular cubic curve $Y \subset \bP^2_{u,v,w}$. By enlarging $q$ if necessary, we may assume $\bF_q$ contains primitive $d$-th roots of unity and the components of $Y$ are geometrically irreducible and the singular points are all $\bF_q$-rational too. 

We first consider the case where $Y$ consists of 3 lines which we may as well take to be the coordinate lines. The ramification covers are all copies of $\bP^1_{\bF_q}$ which totally ramify above the coordinate points. We can express $Y$ as the disjoint union of a coordinate line, say $Y_1: u=0$, an affine coordinate line say $Y_2: v=0, u \neq 0$ and the rest $Y_3$. The ramification covers are respectively $\tilde{Y}_1 \simeq \bP^1 \simeq Y_1, \tilde{Y}_2 \simeq \bA^1 \simeq Y_2, \tilde{Y}_1 \simeq \bA^1 - \text{pt} \simeq Y_1$ (non-canonical isomorphisms). To compute the zeta functions of interest, it suffices to note that $Z^{\Serre}$ is multiplicative on disjoint unions,  $P(?,z)$ is additive and note 
\begin{align*}
    Z^{\Serre}_{\bP_1}(t) & = \frac{1}{(1-t)(1-qt)} & Z^{\Serre}_{\text{pt}}(t) & = \frac{1}{1-t} \\
    P(\bP^1,z) & = 1 + z^2 & P(\text{pt},z) & = 1.
\end{align*}
This gives 
\begin{align}
    Z^{\Serre}_{Y}(t) & = \frac{1}{(1-qt)^3} & P(Y,z) = 3z^2 
\end{align}
Propositions~\ref{prop:zetaAzumaya}, \ref{prop:zetaSmoothRamGlobal} then give 
\begin{equation}  \label{eq:zetaSingCubicRam}
    Z_{\cA}(t) = \prod_{n=1}^{\infty} \left(
\prod_{j=1}^d Z^{\Serre}_{\bP^2}(q^{nd-j}t^{nd}) \cdot\prod_{j=1}^d Z^{\Serre}_Y(q^{nd-j}t^{nd})^{-1}\cdot\prod_{j=1}^r Z^{\Serre}_{Y}(q^{nr-j}t^{nr})
\right).
\end{equation}
This gives Theorem~\ref{thm:introSklyanin} in this case. 
We can also compute the Poincar\'e zeta function, comparing it to the that of an Azumaya algebra $M_d(\cO_{\bP^2})$ of the same degree.
\begin{equation}  \label{eq:PoincareZetaSingCubicRam}
\frac{Z^{Poin}_{\cA}(z,t)}{Z^{Poin}_{M_d(\cO_{\bP^2})}(z,t)} = \prod_{j=0}^{\infty} \frac{\prod_{l=0}^{r-1} (1-z^{2jr+2l+2}t^{jr+r})^{-h}}{\prod_{l=0}^{d-1} (1-z^{2jd+2l+2}t^{jd+d})^{-h}}
\end{equation}
where $h=3$. 

When $Y$ consists of a conic and a line, the argument above can be repeated to give (\ref{eq:zetaSingCubicRam}) but with $Z^{\Serre}_Y(t) = \frac{1}{(1-qt)^2}$ and (\ref{eq:PoincareZetaSingCubicRam}) with $h=2$. Although strictly speaking, not covered by Assumption~\ref{ass:terminalRam}, the methods above do also apply to the case where the ramification is a nodal cubic. In this case, (\ref{eq:zetaSingCubicRam}) holds with $Z^{\Serre}_Y(t) = \frac{1}{1-qt}$ and (\ref{eq:PoincareZetaSingCubicRam}) with $h=1$. This concludes the proof of Theorem~\ref{thm:introSklyanin} when the ramification locus in singular. We conclude this example by noting that the ``Euler'' zeta function in this case is 
\begin{equation}  \label{eq:eulerZetaSkewPoly}
Z^{Poin}_{\cA}(-1,t) = 
\prod_{j=0}^{\infty} (1 - t^{dj+d})^{(h-3)d}\cdot
\prod_{j=0}^{\infty} (1 - t^{rj+r})^{-hr}.
\end{equation}
\end{example}

\begin{example}[Sklyanin orders]
Suppose that $\bF_q$ has primitive $e$-th roots of unity for some $e$. Let $\bar{Y} \subset \bP^2_{\bar{\bF}_q}$ be an elliptic curve and $\tilde{\bar{Y}}$ be an \'etale cyclic cover of $\bar{Y}$ of degree $e$. Then there exists a maximal order $\bar{\cA}$ on $\bP^2_{\bar{\bF}_q}$, ramified on $\bar{Y}$ with ramification given by the cover $\tilde{\bar{Y}} \to \bar{Y}$. Such an order is called a {\em Sklyanin order} since it has a homogeneous coordinate ring which is a three-dimensional Sklyanin algebra. Enlarging $q$ if necessary, we may assume that the maximal order is defined over $\bF_q$. There thus exist degree $e$ maximal orders $\cA$ on $\bP^2_{\bF_q}$, ramified on a smooth elliptic curve $Y$ with ramification given by a cyclic \'etale cover $\tilde{Y} \to Y$ of degree $e$. On enlarging $q$ further if necessary, we may assume $\bF_q$ is integrally closed in both $K(Y)$ and $K(\tilde{Y})$. Let $\cA$ be any maximal order on $\bP^2_{\bF_q}$ with such ramification data. By Weil's theorem \cite[Theorem~9.16B]{Rosen}, $Z^{\Serre}_{\tilde{Y}}(t) = Z^{\Serre}_Y(t) L(t)$ for some polynomial $L(t)$. Now both $Y$ and $\tilde{Y}$ have the same genus, so by the Weil conjectures \cite[Theorem~5.9]{Rosen}, the only possibility is that $L(t) = 1$. This gives the final case of Theorem~\ref{eq:introZeta}. The Poincar\'e zeta functions are readily computed from Theorem~\ref{thm:PoincareZetaOrder} and Addendum~\ref{add:PoincareZetaOrder} since the Betti numbers of $Y$ and $\tilde{Y}$ are $b_0 = 1 = b_2, b_1 = 2$. We only remark that for the ``Euler'' zeta function we have 
$$
Z^{\text{Poin}}_{\cA}(-1, t) = Z^{\text{Poin}}_{M_d(\cO_{\bP^2})}(-1, t).
$$
\end{example}

\begin{remark}
  Let $X$ be a smooth surface over a field $k$. Consider the class $\mathbb{L}=[\mathbb{A}^1]\in K_0(\Var_k)$ of the affine line in the Grothendieck ring of varieties over $k$, and consider Kapranov's \cite{kapranov} motivic zeta function $Z^{\mot}_X(t)\in 1+tK_0(\Var_k)[[ t]]$; see \cite{mustata} for a gentle introduction. Then we have the motivic G\"ottsche-Larsen-Lunts formula 
  $$
      \sum_{n=0}^\infty [\Hilb^n(X)] t^n=\prod_{m=1}^\infty Z^{\mot}_X(\mathbb{L}^{m-1}t^m).$$
  G\"ottsche \cite{Goettsche2000} proved this for algebraically closed fields $k$ of characteristic zero; Larsen and Lunts \cite{Larsen&Lunts} recently proved it for arbitrary $k$. Motivic formulae like this, that are uniform in $k$, provide a universal bridge between Euler characteristics with compact support ($k=\mathbb{C}$) and $\mathbb{F}_q$-point counts ($k=\mathbb{F}_q$). Specialising to the former, so that each $Z^{\mot}_X(\bL^{m-1}t^m)$ becomes $(1-t^m)^{-e(X)}$, we recover G\"ottsche's ``Euler" formula (\ref{got_eul}). Specialising to the latter, so that each $Z^{\mot}_X(\bL^{m-1}t^m)$ becomes $Z^{\Serre}_X(q^{m-1}t^m)$, we recover the $d=1$ case of our global Azumaya formula (\ref{eq:zetaAzumaya}). It would be interesting to reproduce our noncommutative results in this vein.
\end{remark}

\bibliographystyle{amsalpha}

\bibliography{references}

\end{document}